\newtheorem{theorem}{Theorem}[section]
\newtheorem{cor}[theorem]{Corollary}
\newtheorem{lemma}[theorem]{Lemma}
\newtheorem{definition}[theorem]{Definition}
\newtheorem{remark}[theorem]{Remark}
\newtheorem{example}[theorem]{Example}
\newtheorem{conjecture}[theorem]{Conjecture}
\newtheorem{problem}[theorem]{Problem}
\def\I{\mathcal{I}}
\def\Z{{\mathbb{Z}}}
\def\N{{\mathbb{N}}}
\def\C{{\mathbb{C}}}
\def\Y{{\mathbb{Y}}}
\def\bh{{\mathbf{h}}}
\def\bf{{\mathbf{f}}}
\def\bg{{\mathbf{g}}}
\def\lc{\hbox{\rm{lc}}}
\def\lt{\hbox{\rm{lt}}}
\def\D{{\sigma}}
\def\mb{{\mathbbm{m}}}
\def\st{\hbox{ {\rm{s.t.} }}}
\def\sat{\hbox{\rm{sat}}}
\begin{document}
\title{Finite Basis for Radical Well-Mixed Difference Ideals Generated by Binomials}
\author{Jie Wang}
\address{KLMM, Academy of Mathematics and Systems Science, The Chinese Academy of Sciences, Beijing 100190, China}
\email{wangjie212@mails.ucas.ac.cn}
\subjclass[2010]{Primary 12H10}
\keywords{binomial difference ideal, well-mixed difference ideal, finite basis theorem, Hrushovski's problem}
\date{\today}

\begin{abstract}
In this paper, we prove a finite basis theorem for radical well-mixed difference ideals generated by binomials. As a consequence, every strictly ascending chain of radical well-mixed difference ideals generated by binomials in a difference polynomial ring is finite, which solves an open problem of difference algebra raised by E. Hrushovski in the binomial case.
\end{abstract}

\maketitle
\bibliographystyle{amsplain}

\section{Introduction}
In \cite{Hrushovski1}, E. Hrushovski developed the theory of difference scheme, which is one of the major recent advances in difference algebra geometry. In Hrushovski's theory, well-mixed difference ideals played an important role. So it is significant to make clear of the properties of well-mixed difference ideals.

It is well known that Hilbert's basis theorem does not hold for difference ideals in a difference polynomial ring. Instead, we have Ritt-Raudenbush basis theorem which asserts that every perfect difference ideal in a difference polynomial ring has a finite basis. It is naturally to ask if the finitely generated property holds for more difference ideals. Let $K$ be a difference field and $R$ a finitely difference generated difference algebra over $K$. In \cite[Section 4.6]{Hrushovski1}, Hrushovski raised the problem whether a radical well-mixed difference ideal in $R$ is finitely generated. The problem is also equivalent to whether the ascending chain condition holds for radical well-mixed difference ideals in $R$. For the sake of convenience, let us state it as a conjecture:
\begin{conjecture}\label{intro-conj}
Every strictly ascending chain of radical well-mixed difference ideals in $R$ is finite.
\end{conjecture}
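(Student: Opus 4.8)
The plan is to deduce the full conjecture from the binomial finite basis theorem of this paper by a Noetherian induction that first strips off the perfect part of the chain and then reduces each difference-irreducible stratum to a binomial (toric-type) model. Suppose, toward a contradiction, that $R$ carries an infinite strictly ascending chain $I_1 \subsetneq I_2 \subsetneq \cdots$ of radical well-mixed difference ideals. Writing $\{I_j\}$ for the smallest perfect difference ideal containing $I_j$, one obtains an ascending chain $\{I_1\} \subseteq \{I_2\} \subseteq \cdots$ of perfect difference ideals, which stabilizes by the Ritt--Raudenbush basis theorem. Replacing the chain by a tail, I may therefore assume that all the $I_j$ share one and the same perfect closure $P$. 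The problem is thus reduced to the statement that, for a fixed perfect difference ideal $P$ in $R$, the radical well-mixed difference ideals whose perfect closure equals $P$ satisfy the ascending chain condition, which I would attack by induction on the transformal dimension of $P$.

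Next I would decompose $P = P_1 \cap \cdots \cap P_r$ into its finitely many difference-prime (reflexive prime) components, each $P_k$ being the vanishing ideal of a difference-irreducible variety $V_k$. A radical well-mixed ideal $I$ with $\{I\} = P$ should be determined by its restrictions to neighborhoods of the generic points of the $V_k$, so it would suffice to control, for each $k$, the radical well-mixed ideals supported on $V_k$. The crucial step is to produce on each $V_k$ a \emph{binomial model}: a transformal-birational presentation of the local difference coordinate ring of $V_k$ as a localization of a difference polynomial or Laurent ring in which every radical well-mixed ideal with the prescribed perfect closure is generated by binomials. Once such a model is available, the binomial finite basis theorem of the present paper yields the ascending chain condition on that stratum; collecting the strata and feeding the residual lower-dimensional data into the induction hypothesis would then contradict the infinitude of the chain.

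The hard part — and the reason the conjecture remains open beyond the binomial case treated here — is precisely the construction of the binomial model. Away from the toric situation there is no known structure theorem expressing the radical well-mixed difference ideals with a fixed perfect closure in terms of combinatorial (lattice or partial-character) data, and the well-mixed condition $ab \in I \Rightarrow a\,\sigma(b) \in I$ behaves badly under the two operations the argument relies on, namely localization at a generic point and transformal-birational coordinate changes, neither of which is known to preserve well-mixedness. Moreover, the dimension and component theory for well-mixed (as opposed to perfect) difference ideals is not yet developed enough to guarantee that the induction on transformal dimension terminates cleanly. Overcoming these points — in effect, upgrading the combinatorial description of binomial well-mixed ideals to a local description valid along an arbitrary difference-irreducible component — is the main obstacle, and is exactly what the present paper sidesteps by restricting to the binomial case.
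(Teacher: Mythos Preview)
The statement you were asked to prove is Conjecture~\ref{intro-conj}, which the paper explicitly poses as an \emph{open problem} and does not prove in general. The only result the paper establishes toward it is the special case in which all the radical well-mixed $\sigma$-ideals in the chain are generated by binomials (Corollary~\ref{cor} and the subsequent remark). There is therefore no proof in the paper against which your proposal can be compared.

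Your proposal is not a proof either, and to your credit you say so. What you have written is a plausible roadmap---stabilize the perfect closures via Ritt--Raudenbush, decompose the common perfect closure into $\sigma$-prime components, seek a binomial (toric) model on each component, and then invoke the finite basis theorem of this paper---followed by an honest diagnosis of why the roadmap does not close. The obstacles you name are real: there is no structure theorem producing a binomial presentation of the radical well-mixed ideals supported on an arbitrary difference-irreducible component; the well-mixed condition is not known to survive localization or transformally birational coordinate changes; and the dimension/component theory for well-mixed (as opposed to perfect) $\sigma$-ideals is not developed enough to drive a Noetherian induction. These are precisely the reasons the conjecture is open and the paper restricts itself to the binomial case.

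In summary: there is a genuine gap, but it is not a defect relative to the paper---the paper does not prove this statement, and your write-up correctly identifies both a natural line of attack and the points at which it currently fails.
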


Also in \cite[Section 4.6]{Hrushovski1}, Hrushovski proved that the answer is yes under some additional assumptions on $R$. In \cite{levin}, A. Levin showed that the ascending chain condition does not hold if we drop the radical condition. The counter example given by Levin is a well-mixed difference ideal generated by binomials. In \cite[Section 9]{wibmer-group}, M. Wibmer showed that if $R$ can be equipped with the structure of a difference Hopf algebra over $K$, then Conjecture \ref{intro-conj} is valid. In \cite{Jie}, J. Wang proved that Conjecture \ref{intro-conj} is valid for radical well-mixed difference ideals generated by monomials.

Difference ideals generated by binomials were first studied by X.~S. Gao, Z. Huang, C.~M. Yuan in \cite{gao-dbi}. Some basic properties of difference ideals generated by binomials were proved in that paper due to the correspondence between $\Z[x]$-lattices and normal binomial difference ideals.

The main result of this paper is that every radical well-mixed difference ideal generated by binomials in a difference polynomial ring is finitely generated. As a consequence, Conjucture \ref{intro-conj} is valid for radical well-mixed difference ideals generated by binomials in a difference polynomial ring.


\section{Preliminaries}
\subsection{Preliminaries for Difference Algebra}
We recall some basic notions from difference algebra. Standard references are \cite{levin,wibmer}. All rings in this paper will be assumed to be commutative and unital.

A {\em difference ring}, or {\em $\sigma$-ring} for short, is a ring $R$ together with a ring endomorphism $\sigma\colon R\rightarrow R$, and we call $\D$ a {\em difference operator} on $R$. If $R$ is a field, then we call it a {\em difference field}, or {\em $\sigma$-field} for short.
In this paper, all $\sigma$-fields will be assumed to be of characteristic $0$.

Following \cite{gao-tdv}, we introduce the following notation of symbolic exponents. Let $x$ be an algebraic indeterminate and $p=\sum_{i=0}^s c_i x^i\in\N[x]$. For $a$ in a $\sigma$-ring, we denote $a^p = \prod_{i=0}^s (\sigma^i(a))^{c_i}$ with $\sigma^0(a)=a$ and $a^0=1$. It is easy to check that for $p,q\in\N[x], a^{p+q}=a^{p} a^{q}, a^{pq}=(a^{p})^{q}$.

Let $R$ be a $\D$-ring. A {\em $\sigma$-ideal} $I$ in $R$ is an algebraic ideal which is closed under $\sigma$, i.e., $\sigma(I)\subseteq I$. If $I$ also has the property that $a^x\in I$ implies $a\in I$, it is called a {\em reflexive $\sigma$-ideal}. A {\em $\sigma$-prime} ideal is a reflexive $\sigma$-ideal which is prime as an algebraic ideal. A $\sigma$-ideal $I$ is said to be {\em well-mixed} if for $a,b\in K\{\Y\}$, $ab\in I$ implies $ab^x\in I$. A $\sigma$-ideal $I$ is said to be {\em perfect} if for $g\in\N[x]\setminus\{0\}$ and $a\in K\{\Y\}$, $a^g\in I$ implies $a\in I$. It is easy to prove that every perfect $\sigma$-ideal is well-mixed and every $\sigma$-prime ideal is perfect.

If $F\subseteq R$ is a subset of $R$, denote the minimal ideal containing $F$ by $(F)$, the minimal $\D$-ideal containing $F$ by $[F]$ and denote the minimal well-mixed $\D$-ideal, the minimal radical well-mixed $\D$-ideal, the minimal perfect $\D$-ideal containing $F$ by $\langle F\rangle$, $\langle F\rangle_r$, $\{F\}$ respectively, which are called the {\em well-mixed closure}, the {\em radical well-mixed closure}, the {\em perfect closure} of $F$ respectively.

Let $K$ be a $\sigma$-field and $\Y=(y_1,\ldots,y_n)$ a tuple of $\sigma$-indeterminates over $K$. Then the {\em $\sigma$-polynomial ring} over $K$ in $\Y$ is the polynomial ring in the variables $y_i^{x^j}$
for $j\in\N$ and $i=1,\ldots,n$. It is denoted by
$K\{\Y\}=K\{y_1,\ldots,y_n\}$
and has a natural $K$-$\sigma$-algebra structure.

\subsection{Preliminaries for Binomial Difference Ideals}
A {\em $\Z[x]$-lattice} is a $\Z[x]$-submodule of $\Z[x]^n$ for some $n$. Since $\Z[x]^n$ is Noetherian as a $\Z[x]$-module, we see that any $\Z[x]$-lattice is finitely generated as a $\Z[x]$-module.

Let $K$ be a $\sigma$-field and $\Y=(y_1,\ldots,y_n)$ a tuple of $\sigma$-indeterminates over $K$. For $\bf=(f_1,\ldots,f_n)\in\N[x]^n$, we define $\Y^{\bf}=\prod_{i=1}^ny_i^{f_i}$. $\Y^{\bf}$ is called a {\em monomial} in $\Y$ and $\bf$ is called its {\em support}. For $a,b\in K^*$ and $\bf,\bg\in\N[x]^n$, $a\Y^{\bf}+b\Y^{\bg}$ is called a {\em binomial}. If $a=1,b=-1$, then $\Y^{\bf}-\Y^{\bg}$ is called a {\em pure binomial}. A {\em (pure) binomial $\D$-ideal} is a $\D$-ideal generated by (pure) binomials.

For $f\in\Z[x]$, we write $f=f_+-f_-$, where $f_+,f_-\in\N[x]$ are the positive part and the negative part of $f$ respectively. For $\bf\in\Z[x]^n$, $\bf_+=(f_{1_+},\ldots,f_{n_+})$, $\bf_-=(f_{1_-},\ldots,f_{n_-})$.

\begin{definition}
A {\em partial character} $\rho$ on a $\Z[x]$-lattice $L$ is a group homomorphism from $L$ to the multiplicative group $K^*$ satisfying $\rho(x\bf)=(\rho(\bf))^x$ for all $\bf\in L$.
\end{definition}
A {\em trivial} partial character on $L$ is defined by setting $\rho(\bf)=1$ for all $\bf\in L$.

Given a partial character $\rho$ on a $\Z[x]$-lattice $L$, we define the following binomial $\sigma$-ideal in $K\{\Y\}$,
\begin{equation*}
\I_L(\rho):=[\Y^{\bf_{+}}-\rho(\bf)\Y^{\bf_{-}}\mid\bf\in L].
\end{equation*}
$L$ is called the {\em support lattice} of $\I_L(\rho)$. In particular, if $\rho$ is a trivial partial character on $L$, then the binomial $\sigma$-ideal defined by $\rho$ is called a {\em lattice $\D$-ideal}, which is denoted by $\I_L$.


Let $\mb$ be the multiplicatively closed set generated by $y_i^{x^j}$ for $i=1,\ldots,n,j\in\N$. A $\sigma$-ideal $I$ is said to be {\em normal} if for any $M\in\mb$ and $p\in K\{\Y\}$, $Mp\in I$ implies $p\in I$. For any $\sigma$-ideal $I$,
$$I:\mb=\{p\in K\{\Y\}\mid\exists M\in\mb \st Mp\in I\}$$
is a normal $\sigma$-ideal.
\begin{lemma}
A normal binomial $\D$-ideal is radical.
\end{lemma}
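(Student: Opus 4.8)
The plan is to reduce the statement, via normality, to a reducedness property of group algebras. For a $\D$-ideal $J$ of $K\{\Y\}$ write $J^e=JK\{\Y\}_{\mb}$ for its extension to the localization $K\{\Y\}_{\mb}$; since $\mb$ is stable under $\D$, the ring $K\{\Y\}_{\mb}$ is again a $\D$-ring and $J^e$ is a $\D$-ideal of it. Let $I$ be a normal binomial $\D$-ideal; we may assume $I\neq K\{\Y\}$, the remaining case being trivial. The first observation is that normality of $I$ is exactly the assertion $I=I^e\cap K\{\Y\}$ (equivalently $I=I:\mb$), so the localization map $K\{\Y\}/I\to K\{\Y\}_{\mb}/I^e$ is injective; since a subring of a reduced ring is reduced, it suffices to prove that $I^e$ is radical in $K\{\Y\}_{\mb}$.

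Next I would describe $I^e$ explicitly. Identify $K\{\Y\}_{\mb}$ with the group algebra $K[\Z[x]^n]$ of the multiplicative group of Laurent monomials $\Y^{\bf}$, $\bf\in\Z[x]^n$. In $K\{\Y\}_{\mb}$ every binomial $a\Y^{\bf}+b\Y^{\bg}$ becomes a unit multiple of a difference $\Y^{\bf-\bg}-\lambda$ with $\lambda\in K^*$, so $I^e$ is generated by elements $\Y^{\bh}-\lambda$, $\lambda\in K^*$. An elementary computation with such elements shows that $L:=\{\bh\in\Z[x]^n\mid\Y^{\bh}-\lambda\in I^e\text{ for some }\lambda\in K^*\}$ is closed under addition, negation and multiplication by $x$ (the last using that $I^e$ is a $\D$-ideal), hence is a $\Z[x]$-lattice; that $I\neq K\{\Y\}$ forces the scalar $\lambda$ attached to each $\bh\in L$ to be unique, giving a partial character $\rho\colon L\to K^*$; and that $I^e=\I_L(\rho)^e$. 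Equivalently, one may appeal here to the correspondence between $\Z[x]$-lattices and normal binomial $\D$-ideals of \cite{gao-dbi}.

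The core step is then to show that $\I_L(\rho)^e=(\Y^{\bf}-\rho(\bf)\mid\bf\in L)$ is radical in $K[\Z[x]^n]$. Since $K\{\Y\}_{\mb}/\I_L(\rho)^e$ embeds into its base change $\overline{K}\otimes_K\big(K\{\Y\}_{\mb}/\I_L(\rho)^e\big)$, which is a lattice $\D$-ideal quotient of the same form over $\overline{K}$, we may assume $K$ is algebraically closed. As $\overline{K}^{\,*}$ is divisible, hence an injective $\Z$-module, $\rho$ extends to a group homomorphism $\widetilde\rho\colon\Z[x]^n\to\overline{K}^{\,*}$, and the diagonal rescaling $\Y^{\bf}\mapsto\widetilde\rho(\bf)\Y^{\bf}$ is a $\overline{K}$-algebra automorphism of $\overline{K}[\Z[x]^n]$ carrying $\I_L(\rho)^e$ onto the trivial lattice ideal $(\Y^{\bf}-1\mid\bf\in L)$, whose quotient is the group algebra $\overline{K}[\Z[x]^n/L]$. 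Finally, the group algebra of any abelian group $A$ over a field of characteristic $0$ is reduced: write $A$ as the directed union of its finitely generated subgroups, so its group algebra is the corresponding direct limit; for a finitely generated $A'=\Z^r\oplus T$ with $T$ finite, $K[A']$ is a Laurent polynomial ring over $K[T]$, and $K[T]$ is semisimple by Maschke's theorem---this is the only point where characteristic $0$ is used---hence reduced; and a direct limit of reduced rings is reduced.

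I expect the only real obstacles to be bookkeeping rather than conceptual depth: the ring $K\{\Y\}_{\mb}$ has infinitely many variables, so there is no Noetherianity available and the reducedness of the quotients in the core step must come from the base-change and direct-limit arguments above rather than from a finite-dimensional theorem, and some care is needed in identifying $I^e$ with a lattice $\D$-ideal (equivalently, in using normality to pass cleanly to the Laurent ring). The characteristic $0$ assumption is essential: Maschke's theorem fails in characteristic $p$, where already $\overline{K}[t]/(t^p-1)=\overline{K}[t]/(t-1)^p$ is non-reduced.
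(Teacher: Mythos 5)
Your proof is correct. The paper itself gives no argument for this lemma, deferring entirely to \cite{gao-dbi}; your reduction via $I=I:\mb$ to the Laurent ring $K\{\Y\}_{\mb}\cong K[\Z[x]^n]$, the identification of $I^e$ with a lattice ideal twisted by a partial character, the rescaling automorphism obtained from an injective-module extension of $\rho$ to $\overline{K}^{\,*}$, and the reducedness of group algebras of abelian groups in characteristic $0$ is precisely the standard argument of that reference (the difference analogue of Eisenbud--Sturmfels), so your write-up is a faithful and complete reconstruction of the intended proof.
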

\begin{proof}
For the proof, please refer to \cite{gao-dbi}.
\end{proof}

In \cite{gao-dbi}, it was proved that there is a one-to-one correspondence between normal binomial $\sigma$-ideals and partial characters $\rho$ on some $\Z[x]$-lattice $L$.


In \cite{gao-dbi}, the concept of {\em M-saturation} of a $\Z[x]$-lattice was introduced.
\begin{definition}
Assume $K$ is algebraically closed. If a $\Z[x]$-lattice $L$ satisfies
\begin{equation}\label{eq-per}
m\bf\in L\Rightarrow (x-o_m)\bf\in L,
\end{equation}
where $m\in\N$, $\bf\in\Z[x]^n$, and $o_m$ is the $m$-th transforming degree of the unity of $K$, then it is said to be {\em M-saturated}. For any $\Z[x]$-lattice $L$, the smallest M-saturated $\Z[x]$-lattice containing $L$ is called the {\em M-saturation} of $L$ and is denoted by $\sat_{M}(L)$.
\end{definition}
The following two lemmas were proved in \cite{gao-dbi} for the Laurent case and it is easy to generalize to the normal case.
\begin{lemma}\label{lm2}
Assume $K$ is algebraically closed and inversive. Let $\rho$ be a partial character on a $\Z[x]$-lattice $L$. If $\I_L(\rho)$ is well-mixed, then $L$ is M-saturated. Conversely, if $L$ is M-saturated, then either $\langle\I_L(\rho)\rangle:\mb=[1]$ or $\I_L(\rho)$ is well-mixed.
\end{lemma}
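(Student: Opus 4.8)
The plan is to adapt, to the present normal setting, the argument used for the Laurent analogue in \cite{gao-dbi}. Three facts are used throughout. (i) Since $\I_L(\rho)$ is normal it is radical, and every $y_i$ is a non-zero-divisor modulo $\I_L(\rho)$, for $y_ip\in\I_L(\rho)$ forces $p\in\I_L(\rho)$ by normality. (ii) By the one-to-one correspondence recalled above, for $c\in K^{*}$ and $\bg\in\Z[x]^n$ the binomial $\Y^{\bg_{+}}-c\Y^{\bg_{-}}$ lies in $\I_L(\rho)$ if and only if $\bg\in L$, in which case $c=\rho(\bg)$. (iii) As $K$ is algebraically closed and inversive, $\sigma$ maps the group of $m$-th roots of unity into itself, so for a fixed primitive $m$-th root of unity $\zeta_m$ one has $\sigma(\zeta_m)=\zeta_m^{o_m}$ with $\gcd(o_m,m)=1$.

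Suppose first that $\I_L(\rho)$ is well-mixed and let $m\bf\in L$; we must show $(x-o_m)\bf\in L$. Write $\I_L(\rho)=\bigcap_i\mathfrak{p}_i$ for its finitely many minimal primes. Each $\mathfrak{p}_i$ is $\sigma$-closed: choosing $a_i\in(\bigcap_{j\ne i}\mathfrak{p}_j)\setminus\mathfrak{p}_i$, for any $b\in\mathfrak{p}_i$ one has $a_ib\in\I_L(\rho)$, hence $a_i\sigma(b)\in\I_L(\rho)\subseteq\mathfrak{p}_i$, and $a_i\notin\mathfrak{p}_i$ forces $\sigma(b)\in\mathfrak{p}_i$. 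Fix $i$ and pass to the fraction field $F$ of the domain $K\{\Y\}/\mathfrak{p}_i$; by (i) the element $\Y^{\bf}:=\Y^{\bf_{+}}/\Y^{\bf_{-}}$ is a nonzero element of $F$, and the generator of $\I_L(\rho)$ attached to $m\bf$ gives $(\Y^{\bf})^{m}=\rho(\bf)^{m}$. Since $K$ is algebraically closed, $\Y^{\bf}=\zeta_m^{\,j}\rho(\bf)$ for some $j$, so $\Y^{\bf_{+}}-\zeta_m^{\,j}\rho(\bf)\Y^{\bf_{-}}\in\mathfrak{p}_i$; applying $\sigma$ and using (iii) and $\sigma(\rho(\bf))=\rho(\bf)^{x}=\rho(x\bf)$ gives $\Y^{x\bf}=\zeta_m^{\,o_mj}\rho(x\bf)$ in $F$. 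Hence $\Y^{(x-o_m)\bf}=\Y^{x\bf}(\Y^{\bf})^{-o_m}=\rho(x\bf)\rho(\bf)^{-o_m}=\rho((x-o_m)\bf)$, and clearing denominators shows that the binomial attached to $(x-o_m)\bf$ lies in $\mathfrak{p}_i$. As this holds for every $i$, it lies in $\I_L(\rho)$, and (ii) yields $(x-o_m)\bf\in L$. Thus $L$ is M-saturated.

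For the converse, assume $L$ is M-saturated and set $J:=\langle\I_L(\rho)\rangle:\mb$. The well-mixed closure of $\I_L(\rho)$ is well-mixed, the operation $I\mapsto I:\mb$ preserves well-mixedness and produces a normal $\sigma$-ideal containing the original, and any normal well-mixed $\sigma$-ideal containing $\I_L(\rho)$ contains $J$; so $J$ is the smallest normal well-mixed $\sigma$-ideal containing $\I_L(\rho)$. If $J=[1]$ we are in the first alternative, so assume $J\ne[1]$. One argues that $J$ is then again a binomial $\sigma$-ideal — roughly, modulo normalization the only elements the well-mixed axiom forces into the closure of a normal binomial $\sigma$-ideal are again binomials — say $J=\I_{L'}(\rho')$. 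By (ii), $L\subseteq L'$ and $\rho'|_L=\rho$; and since $J$ is well-mixed and normal, the first part of the lemma applied to $J$ shows $L'$ is M-saturated. The crux is that forming the well-mixed closure does not enlarge the support lattice beyond $\sat_M(L)$; since $L=\sat_M(L)$, this forces $L'=L$, hence $\rho'=\rho$ by uniqueness in the correspondence, hence $J=\I_L(\rho)$, so $\I_L(\rho)$ coincides with its well-mixed closure and is well-mixed.

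I expect the converse to be the main difficulty, concentrated in two claims: that $\langle\I_L(\rho)\rangle:\mb$ is either $[1]$ or a binomial $\sigma$-ideal, and that in the binomial case its support lattice equals $\sat_M(L)$ — the latter being exactly where condition \eqref{eq-per} enters. A convenient way to organize the computation (the Laurent case is \cite{gao-dbi}) is to extend everything to the localization $K\{\Y\}_{\mb}$, where $\I_L(\rho)$ becomes a Laurent binomial $\sigma$-ideal whose well-mixedness is governed directly by the M-saturation of $L$; well-mixedness then descends to $K\{\Y\}$ since contraction along $K\{\Y\}\hookrightarrow K\{\Y\}_{\mb}$ preserves it, and the $[1]$ branch accounts for the degenerate case in which this closure becomes improper. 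Algebraic closedness of $K$ is used to split $T^{m}-\rho(\bf)^{m}$, and inversivity of $K$ to guarantee that $\sigma$ permutes the $m$-th roots of unity, i.e. fact (iii).
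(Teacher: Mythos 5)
The paper does not actually prove this lemma: it only says the Laurent case is in \cite{gao-dbi} and that the generalization to the normal case is easy. So your proposal has to stand on its own, and as written it has gaps in both directions.

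In the forward direction the gap is at the very first step. You write $\I_L(\rho)=\bigcap_i\mathfrak{p}_i$ as a \emph{finite} intersection of minimal primes and then prove $\sigma$-closedness of each $\mathfrak{p}_i$ by choosing $a_i\in(\bigcap_{j\ne i}\mathfrak{p}_j)\setminus\mathfrak{p}_i$. But $K\{\Y\}$ is a polynomial ring in infinitely many variables, and a radical (even normal binomial) $\sigma$-ideal there can have infinitely many minimal primes: for the normal binomial $\sigma$-ideal with support lattice $L=(2x-2)\subseteq\Z[x]$, the defining relations $(y^{x^{i+1}})^2=(y^{x^i})^2$ give one irreducible component $\{y^{x^{i+1}}=\epsilon_{i+1}y^{x^i}\}$ for every sign sequence $(\epsilon_1,\epsilon_2,\dots)\in\{\pm1\}^{\N}$. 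With infinitely many components the element $a_i$ need not exist, and finiteness of the components is essentially equivalent to the M-saturation you are trying to establish, so it cannot be assumed at the outset. (The Laurent-case argument avoids this by working directly with the factorization $\prod_{j=0}^{m-1}(\Y^{\mathbf{f}_+}-\zeta_m^{\,j}c\,\Y^{\mathbf{f}_-})\in\I_L(\rho)$ and the well-mixed axiom.) A smaller slip in the same passage: $\rho(\mathbf{f})$ and $\rho(x\mathbf{f})$ are undefined, since only $m\mathbf{f}$ lies in $L$; you need to introduce an $m$-th root $c$ of $\rho(m\mathbf{f})$ and check that the resulting constant $\sigma(c)c^{-o_m}$ is independent of the choice of component.

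The converse is where the real content of the lemma lies, and there your two central claims --- that $\langle\I_L(\rho)\rangle:\mb$ is again a normal binomial $\sigma$-ideal when it is proper, and that its support lattice does not exceed $\sat_M(L)$ --- are only asserted (``one argues that \dots roughly \dots'', ``the crux is that \dots''). The second claim is essentially Lemma \ref{lm0}, which in this paper comes after the present lemma and is itself only cited from \cite{gao-dbi}; invoking it here without proof leaves the argument circular. As written, the converse is a plan for a proof rather than a proof, and the plan defers exactly the steps where the hypothesis of M-saturation must actually be used.
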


\begin{lemma}\label{lm0}
Let $K$ be an algebraically closed and inversive $\sigma$-field and $\rho$ a partial character on a $\Z[x]$-lattice $L$. Then $\langle\I_L(\rho)\rangle_r:\mb$ is either $[1]$ or a normal binomial $\sigma$-ideal
whose support lattice is $\sat_M(L)$. In particular, $\langle\I_L\rangle_r:\mb$ is either $[1]$ or $\I_{\sat_M(L)}$.
\end{lemma}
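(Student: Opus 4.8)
The plan is to reduce everything to the normal-binomial/partial-character dictionary of \cite{gao-dbi} together with Lemma~\ref{lm2}. First observe that $J:=\langle\I_L(\rho)\rangle_r:\mb$ is automatically a \emph{normal, radical, well-mixed} binomial $\sigma$-ideal: normality holds by definition of $(\cdot):\mb$, while radicality and the well-mixed property descend from $\langle\I_L(\rho)\rangle_r$ to its $\mb$-saturation by the usual one-line checks (if $Ma^k\in\langle\I_L(\rho)\rangle_r$ then $(Ma)^k\in\langle\I_L(\rho)\rangle_r$, so $Ma\in\langle\I_L(\rho)\rangle_r$; if $Mab\in\langle\I_L(\rho)\rangle_r$ then $(Ma)b^x\in\langle\I_L(\rho)\rangle_r$, so $M(ab^x)\in\langle\I_L(\rho)\rangle_r$). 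If $J=[1]$ we are done, so assume $J\neq[1]$. By the one-to-one correspondence of \cite{gao-dbi}, $J=\I_{L'}(\rho')$ for a unique partial character $\rho'$ on a $\Z[x]$-lattice $L'$, and it remains to prove $L'=\sat_M(L)$.

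For $\sat_M(L)\subseteq L'$: from $\I_L(\rho)\subseteq J=\I_{L'}(\rho')$ the correspondence gives $L\subseteq L'$ and $\rho'|_L=\rho$; since $J$ is well-mixed, Lemma~\ref{lm2} forces $L'$ to be M-saturated, and any M-saturated lattice containing $L$ contains $\sat_M(L)$.

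For the reverse inclusion, set $\hat\rho:=\rho'|_{\sat_M(L)}$ and consider $\I_{\sat_M(L)}(\hat\rho)\subseteq\I_{L'}(\rho')=J$. Since $\sat_M(L)$ is M-saturated, Lemma~\ref{lm2} says $\I_{\sat_M(L)}(\hat\rho)$ is either well-mixed or satisfies $\langle\I_{\sat_M(L)}(\hat\rho)\rangle:\mb=[1]$; the latter is impossible because $\langle\I_{\sat_M(L)}(\hat\rho)\rangle:\mb\subseteq J\neq[1]$ (monotonicity of the well-mixed closure and of $(\cdot):\mb$, using that $J$ is already normal and well-mixed). So $\I_{\sat_M(L)}(\hat\rho)$ is well-mixed, whence its $\mb$-saturation $\I_{\sat_M(L)}(\hat\rho):\mb$ is a normal, hence radical, well-mixed $\sigma$-ideal containing $\I_L(\rho)$ — therefore containing $\langle\I_L(\rho)\rangle_r$ by minimality, and being normal containing $J$ — while at the same time $\I_{\sat_M(L)}(\hat\rho):\mb\subseteq J:\mb=J$. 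Thus $\I_{\sat_M(L)}(\hat\rho):\mb=J=\I_{L'}(\rho')$, and comparing support lattices via the uniqueness in the correspondence — together with the fact that, for the already M-saturated lattice $\sat_M(L)$, passing to $\mb$-saturation does not enlarge the support lattice of $\I_{\sat_M(L)}(\hat\rho)$ beyond $\sat_M(L)$ — yields $L'=\sat_M(L)$. For the ``in particular'' statement, when $\rho$ is trivial the radical well-mixed closure and the $\mb$-saturation of a lattice $\sigma$-ideal are again lattice $\sigma$-ideals, so the partial character $\rho'$ of $J$ is trivial and $J=\I_{L'}=\I_{\sat_M(L)}$.

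The technical heart — and the step I expect to be the main obstacle — is the last comparison of support lattices: one must check that $\mb$-saturating $\I_{\sat_M(L)}(\hat\rho)$ cannot create binomials supported outside $\sat_M(L)$, i.e.\ that $\sat_M(L)$ is already closed under the lattice operation ``support lattice of the normalization''. This is exactly what the transforming-degree condition \eqref{eq-per} defining M-saturation is designed to guarantee, and verifying it carefully (paralleling the computations in \cite{gao-dbi}) is the only nonformal ingredient; the remainder is bookkeeping with the closure operators and the normal-binomial/partial-character dictionary.
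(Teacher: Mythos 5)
The paper itself gives no proof of this lemma --- it is quoted from \cite{gao-dbi} with the remark that the Laurent-case argument ``is easy to generalize to the normal case'' --- so the comparison must be against what such a proof has to contain. Your argument has a genuine gap at its very first step: you assert that $J=\langle\I_L(\rho)\rangle_r:\mb$ is ``automatically'' a normal, radical, well-mixed \emph{binomial} $\D$-ideal. Normality, radicality and well-mixedness are indeed the one-line checks you give, but binomiality is not automatic, and it is precisely the substance of the lemma. (Recall the Problem at the end of the paper: whether the radical well-mixed closure of a binomial $\D$-ideal is again binomial is open, and the example with $\langle y_1^2(y_3-y_4),y_2^2(y_3-y_4)\rangle$ shows that the well-mixed closure of a binomial $\D$-ideal can fail to be binomial; the point of the lemma is that saturating by $\mb$ restores binomiality with a controlled support lattice.) Everything after that first sentence --- writing $J=\I_{L'}(\rho')$, extracting $\rho'$ and restricting it to $\sat_M(L)$, invoking Lemma \ref{lm2} to see that $L'$ is M-saturated --- presupposes the correspondence with partial characters, which applies only to ideals already known to be normal \emph{binomial}. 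So the proof is circular exactly where it matters. A telling symptom is that you never use the hypotheses that $K$ is algebraically closed and inversive, yet they are essential: they are what allow one to extend $\rho$ from $L$ to a partial character on $\sat_M(L)$ by adjoining $m$-th roots of the values $\rho(m\bf)$.

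The repair is to run the argument in the opposite direction, as in \cite{gao-dbi}: construct an extension $\rho'$ of $\rho$ to $\sat_M(L)$ explicitly (this is where algebraic closedness enters, and where the $[1]$ alternative arises when no consistent extension exists); show by a direct computation --- factoring $\Y^{m\bf_+}-\rho(m\bf)\Y^{m\bf_-}=\prod_{\zeta^m=1}(\Y^{\bf_+}-\zeta c\,\Y^{\bf_-})$ with $c^m=\rho(m\bf)$, then using well-mixedness and radicality to pass from $m\bf$ to $(x-o_m)\bf$ --- that $\I_{\sat_M(L)}(\rho')\subseteq\langle\I_L(\rho)\rangle_r:\mb$; and show conversely that $\I_{\sat_M(L)}(\rho'):\mb$ is a normal (hence radical) well-mixed $\D$-ideal containing $\I_L(\rho)$, so that it contains $\langle\I_L(\rho)\rangle_r:\mb$ by minimality of the radical well-mixed closure. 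The two inclusions identify $J$ with $\I_{\sat_M(L)}(\rho'):\mb$ and in particular establish that it is binomial; your closing support-lattice comparison then becomes unnecessary. The step you single out as ``the technical heart'' is in fact the easy bookkeeping; the missing root-of-unity computation and the construction of $\rho'$ are the real content.
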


\section{Radical Well-Mixed Difference Ideal Generated by Binomials is Fininitely Generated}
In this section, we will prove every radical well-mixed $\D$-ideal generated by binomials in a $\D$-polynomial ring is finitely generated as a radical well-mixed $\D$-ideal. For simplicity, we only consider the case for pure binomials since it is easy to generalize the results to any binomials.

For convenience, for $h\in\Z[x]$, if $\deg(h_+)>\deg(h_-)$, we denote $h^+=h_+$ and $h^-=h_-$. Otherwise, we denote $h^+=h_-$ and $h^-=h_+$. Moveover, we set $\deg(0)=-1$.

For $a,b,c,d\in\N$, we define $ax^b>cx^d$ if $b>d$, or $b=d$ and $a>c$. For $h\in\Z[x]$, we use $\lt(h)$ and $\lc(h)$ to denote the leading term of $h$ and the leading coefficient of $h$ respectively.
\begin{theorem}\label{thm1}
For any $\Z[x]$-lattice $L\subseteq\Z[x]^n$, $\langle \I_L\rangle_r$ is finitely generated as a radical well-mixed $\D$-ideal.
\end{theorem}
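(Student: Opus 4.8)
The plan is to reduce the statement to a question about finitely many $\Z[x]$-lattices obtained from $L$ by saturation and by passing to the coordinate subspaces on which the monomial generators can vanish. The starting point is Lemma~\ref{lm0}: after inverting $\mb$ we know $\langle\I_L\rangle_r\colon\mb$ is either $[1]$ or the normal lattice $\D$-ideal $\I_{\sat_M(L)}$, and the latter is radical, well-mixed and --- being normal and binomial --- in bijection with its support lattice. Since $\Z[x]^n$ is Noetherian as a $\Z[x]$-module, $\sat_M(L)$ is finitely generated as a $\Z[x]$-module, say by $\bf_1,\dots,\bf_k$; I would first show that $\I_{\sat_M(L)}$ is generated as a radical well-mixed $\D$-ideal by the finitely many pure binomials $\Y^{(\bf_j)_+}-\Y^{(\bf_j)_-}$ together with a bounded collection of their $x$-shifts, using the M-saturation property \eqref{eq-per} to absorb the "new" generators $(x-o_m)\bf$ forced by divisibility relations $m\bf\in\sat_M(L)$.

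The genuine difficulty is recovering $\langle\I_L\rangle_r$ from $\langle\I_L\rangle_r\colon\mb$. Passing to the localization loses exactly the components of the variety lying inside coordinate hyperplanes $y_i=0$. The standard device is a primary-type decomposition: for each subset $W\subseteq\{1,\dots,n\}$ one sets $y_i=0$ for $i\in W$ and considers the contraction of $L$ to the remaining coordinates, getting a $\Z[x]$-lattice $L_W\subseteq\Z[x]^{\{1,\dots,n\}\setminus W}$ whose associated lattice $\D$-ideal $\I_{L_W}$ (pushed forward by adding the $y_i$, $i\in W$) is one of the "components at infinity". I would prove, by induction on $n$ or on $|W|$, an identity of the shape
\begin{equation*}
\langle\I_L\rangle_r=\bigcap_{W}\Big(\langle\I_{L_W}\rangle_r+(y_i\mid i\in W)\Big),
\end{equation*}
or rather the weaker containment needed: every binomial generator of $\langle\I_L\rangle_r$ lies in a radical well-mixed $\D$-ideal generated by finitely many binomials coming from the $L_W$. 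The well-mixed hypothesis is what makes this work: if a monomial multiple $M p$ lies in the ideal and $M$ involves $y_i$, well-mixedness lets one trade factors of $y_i$ for factors of $y_i^{x}$, and iterating either clears $M$ entirely (landing in the $\mb$-saturation, handled above) or concludes that $p$ itself is divisible by some $y_i$ (landing in the $W\ni i$ stratum). Each $L_W$ has fewer variables, so the induction closes; there are only finitely many $W$, so finitely many finitely generated pieces assemble the whole ideal.

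I expect the main obstacle to be the bookkeeping in this last reduction: controlling how many $x$-shifts of the generators one must include, i.e. showing the "trading $y_i$ for $y_i^x$" process terminates after a bounded number of steps rather than producing an infinite ascending chain of shifts. This is where one must use M-saturation quantitatively --- the bound $o_m$ on transforming degrees of roots of unity, together with the fact that $\sat_M(L)/L$ is controlled, caps the degree of the extra generators $(x-o_m)\bf$ --- and where the characteristic-$0$ assumption on $K$ enters. Once that termination bound is in hand, the finiteness of the whole presentation follows formally: finitely many strata $W$, each contributing a normal lattice $\D$-ideal with a finitely generated support lattice, each of which needs only finitely many shifted binomial generators as a radical well-mixed $\D$-ideal.
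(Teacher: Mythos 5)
There is a genuine gap, and it sits at the very first step of your plan. You assert that $\I_{\sat_M(L)}$ is generated as a radical well-mixed $\D$-ideal by the binomials attached to a finite set of $\Z[x]$-module generators $\bf_1,\dots,\bf_k$ of the lattice ``together with a bounded collection of their $x$-shifts,'' but this is precisely the content of the theorem, not a reduction of it. The binomial attached to $\bf+\bg$ is not an algebraic consequence of the binomials attached to $\bf$ and $\bg$: one only gets a monomial multiple of it, and clearing that monomial is exactly where well-mixedness and radicality must do real work. The paper's own closing example shows how delicate this is: for the principal lattice $L=((x-1,1-x))$ the ideal $\I_L$ is \emph{not} finitely generated as a well-mixed $\D$-ideal, so no argument that produces the generators from module generators plus shifts and the well-mixed closure alone can succeed; the radical operation is essential. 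The actual finiteness mechanism in the paper is different and is not present in your proposal: one applies Dickson's lemma to the tuples $(\deg(h_i^+),\lc(h_i^+),\deg(h_i^-))$ of lattice elements (stratified by the sign pattern of the leading coefficients) to extract a finite set $G_\tau$ of ``minimal'' lattice vectors, then runs a division-style induction on the leading terms $(\lt(h_1^+),\dots,\lt(h_n^+))$: subtracting a shifted multiple of a minimal generator leaves a lattice binomial with strictly smaller leading data, and after the induction one trades monomial factors using well-mixedness to reach $\Y^{\bh_+}(\Y^{\bh_+}-\Y^{\bh_-})$ and $\Y^{\bh_-}(\Y^{\bh_+}-\Y^{\bh_-})\in\I_0$, hence $(\Y^{\bh_+}-\Y^{\bh_-})^2\in\I_0$ and one finishes by radicality. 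Your proposal contains no substitute for the Dickson-type finiteness or for the squaring trick, and M-saturation does not supply either: condition \eqref{eq-per} governs when the closure is normal, not how many generators are needed.

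Two further points. First, the detour through $\langle\I_L\rangle_r:\mb$ and Lemma \ref{lm0} imports the hypotheses that $K$ is algebraically closed and inversive, which Theorem \ref{thm1} does not assume; the paper's direct proof needs no such assumption, and in the paper the localization/stratification machinery is reserved for the general binomial case (Theorem \ref{thm3}), where it rests on Theorem \ref{thm1} rather than proving it. Second, even granting your decomposition over subsets $W$, concluding that a finite intersection of finitely generated radical well-mixed $\D$-ideals is again finitely generated is not automatic; it requires the product formula $\langle F\rangle_r\cap\langle G\rangle_r=\langle FG\rangle_r$ (Lemma \ref{lm6}), which your sketch does not invoke.
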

\begin{proof}
Denote the set of all maps from $\{1,\ldots,n\}$ to $\{+,-,0\}$ by $\Lambda$ and $\tau_0\in\Lambda$ is the map such that $\tau_0(i)=0$ for $1\le i\le n$. Let $\Lambda_0=\Lambda\backslash\{\tau_0\}$. For any $\tau\in\Lambda_0$, define
\begin{align*}
A_{\tau}:=&\{(h_1,\ldots,h_n)\in L\mid \lc(h_i)>0 \textrm{ if } \tau(i)=+, \textrm{ } \lc(h_i)<0 \textrm{ if } \tau(i)=-,
\textrm{ and } \lc(h_i)=0\\ &\textrm{ if } \tau(i)=0, \textrm{ } i=1,\ldots,n\},
\end{align*}
and
\begin{align*}
\Sigma_{\tau}:=\{(\deg(h_1^+),\lc(h_1^+),\ldots,\deg(h_n^+),\lc(h_n^+),\deg(h_1^-),\ldots,\deg(h_n^-))\mid (h_1,\ldots,h_n)\in A_{\tau}\}.
\end{align*}
For any $\tau\in\Lambda_0$, let $G_{\tau}$ be the subset of $A_{\tau}$ such that
\begin{align*}
\{(\deg(h_1^+),\lc(h_1^+),\ldots,\deg(h_n^+),\lc(h_n^+),\deg(g_1^-),\ldots,\deg(g_n^-))\mid \bg=(g_1,\ldots,g_n)\in G_{\tau}\}
\end{align*}
is the set of minimal elements in $\Sigma_{\tau}$ under the product order.
It follows that $G_{\tau}$ is a finite set. Let
$$F_{\tau}:=\{\Y^{\bg_+}-\Y^{\bg_-}\mid \bg\in G_{\tau}\}.$$
We claim that the finite set $\cup_{\tau\in\Lambda_0}F_{\tau}$ generates $\langle \I_L\rangle_r$ as a radical well-mixed $\D$-ideal.

Denote $\I_0=\langle\cup_{\tau\in\Lambda_0}F_{\tau}\rangle_r$. We will prove the claim by showing that $\Y^{\bh_+}-\Y^{\bh_-}\in\I_0$ for all $\bh\in L$. Let us do induction on $(\lt(h_1^+),\ldots,\lt(h_n^+))$ under the lexicographic order for $\bh=(h_1,\ldots,h_n)\in L$. For the simplicity, we will assume that $\Y^{\bh_+}-\Y^{\bh_-}$ has the form
$$y_1^{h_1^+}\cdots y_t^{h_t^+}y_{t+1}^{h_{t+1}^-}\cdots y_n^{h_n^{-}}-y_1^{h_1^-}\cdots y_t^{h_t^-}y_{t+1}^{h_{t+1}^+}\cdots y_n^{h_n^{+}},$$
where $1\le t\le n$. And without loss of generality, we furthermore assume $\lc(h_i)\ne0$ for $1\le i\le n$.

The case for $\bh=\mathbf{0}$ is trivial. Now for the inductive step. By definition, there exists $\tau\in\Lambda_0$ and $(g_1,\ldots,g_n)\in G_{\tau}$ such that $(h_1,\ldots,h_n)\in A_{\tau}$ and $\deg(g_i^+)\le\deg(h_i^+),\lc(g_i^+)\le\lc(h_i^+)$,$\deg(g_i^-)\le\deg(h_i^-), i=1,\ldots,n$. Choose $j\in\{1,\ldots,n\}$ such that
$$\deg(h_j^+)-\deg(g_j^+)=\min_{1\le i\le n}\{\deg(h_i^+)-\deg(g_i^+)\}.$$
Without loss of generality, we can assume $j=1$. Let $s=\deg(h_1^+)-\deg(g_1^+)\ge0$. Since $\lc(h_1^+)\ge\lc(g_1^+)$, there exists an $e\in\N[x]$ such that $\deg(e)<\deg(h_1^+)$ and $p:=h_1^++e-x^sg_1^+\in\N[x]$ with $\lt(p)<\lt(h_1^+)$. Then
\begin{align*}
&y_1^{e}y_2^{x^sg_2^+}\cdots y_{t}^{x^sg_t^+}y_{t+1}^{x^sg_{t+1}^-}\cdots y_n^{x^sg_n^-}(y_1^{h_1^+}\cdots y_t^{h_t^+}y_{t+1}^{h_{t+1}^-}\cdots y_n^{h_n^{-}}-y_1^{h_1^-}\cdots y_t^{h_t^-}y_{t+1}^{h_{t+1}^+}\cdots y_n^{h_n^{+}})\\
={}&y_1^{p+x^sg_1^+}y_2^{h_2^++x^sg_2^+}\cdots y_t^{h_t^++x^sg_t^+}y_{t+1}^{h_{t+1}^-+x^sg_{t+1}^-}\cdots y_n^{h_n^-+x^sg_n^-}\\
-{}&y_1^{h_1^-+e}y_2^{h_2^-+x^sg_2^+}\cdots y_t^{h_t^-+x^sg_t^+}y_{t+1}^{h_{t+1}^++x^sg_{t+1}^-}\cdots y_n^{h_n^++x^sg_n^-}\\
={}&(y_1^{g_1^+}\cdots y_t^{g_t^+}y_{t+1}^{g_{t+1}^-}\cdots y_n^{g_n^{-}}-y_1^{g_1^-}\cdots y_t^{g_t^-}y_{t+1}^{g_{t+1}^+}\cdots y_n^{g_n^{+}})^{x^s}y_1^{p}y_2^{h_2^+}\cdots y_t^{h_t^+}y_{t+1}^{h_{t+1}^-}\cdots y_n^{h_n^-}\\
+{}&y_1^{p+x^sg_1^-}y_2^{h_2^++x^sg_2^-}\cdots y_t^{h_t^++x^sg_t^-}y_{t+1}^{h_{t+1}^-+x^sg_{t+1}^+}\cdots y_n^{h_n^-+x^sg_n^+}\\
-{}&y_1^{h_1^-+e}y_2^{h_2^-+x^sg_2^+}\cdots y_t^{h_t^-+x^sg_t^+}y_{t+1}^{h_{t+1}^++x^sg_{t+1}^-}\cdots y_n^{h_n^++x^sg_n^-}\\
={}&(y_1^{g_1^+}\cdots y_t^{g_t^+}y_{t+1}^{g_{t+1}^-}\cdots y_n^{g_n^{-}}-y_1^{g_1^-}\cdots y_t^{g_t^-}y_{t+1}^{g_{t+1}^+}\cdots y_n^{g_n^{+}})^{x^s}y_1^{p}y_2^{h_2^+}\cdots y_t^{h_t^+}y_{t+1}^{h_{t+1}^-}\cdots y_n^{h_n^-}\\
+{}&y_1^{d_1}\cdots y_n^{d_n}(y_1^{w_{1_+}}\cdots y_n^{w_{n_{+}}}-y_1^{w_{1_-}}\cdots y_n^{w_{n_-}}).
\end{align*}
Since $\lt(p+x^sg_1^-)<\lt(h_1^+),\lt(h_1^-+e)<\lt(h_1^+)$, then $\lt(w_1^+)<\lt(h_1^+)$ and because of the choice of $j$, we have $s+\deg(g_i^+)\le\deg(h_i^+)$ for $2\le i\le n$, from which it follows $\lt(w_i^+)\le\lt(h_i^+), 2\le i\le n$. Therefore, $(\lt(w_1^+),\ldots,\lt(w_n^+))<(\lt(h_1^+),\ldots,\lt(h_n^+))$. Thus by the induction hypothesis, $y_1^{w_{1_+}}\cdots y_n^{w_{n_{+}}}-y_1^{w_{1_-}}\cdots y_n^{w_{n_-}}\in\I_0$ and hence
$$y_1^{e}y_2^{x^sg_2^+}\cdots y_{t}^{x^sg_t^+}y_{t+1}^{x^sg_{t+1}^-}\cdots y_n^{x^sg_n^-}(\Y^{\bh_+}-\Y^{\bh_-})\in\I_0.$$
So by the properties of radical well-mixed $\D$-ideals, we have
$$y_1^{x^sg_1^+}\cdots y_{t}^{x^sg_t^+}y_{t+1}^{x^sg_{t+1}^-}\cdots y_n^{x^sg_n^-}(\Y^{\bh_+}-\Y^{\bh_-})\in\I_0,$$
and then
$$y_1^{x^sg_1^-}\cdots y_{t}^{x^sg_t^-}y_{t+1}^{x^sg_{t+1}^+}\cdots y_n^{x^sg_n^+}(\Y^{\bh_+}-\Y^{\bh_-})\in\I_0.$$
If $s>0$, let $s'=\max\{0,s-\min_{1\le i\le t}\{\deg(g_i^+)-\deg(g_i^-)\}\}<s$. Again by the properties of radical well-mixed $\D$-ideals, we have
$$y_1^{x^{s'}g_1^+}\cdots y_{t}^{x^{s'}g_t^+}y_{t+1}^{x^sg_{t+1}^+}\cdots y_n^{x^sg_n^+}(\Y^{\bh_+}-\Y^{\bh_-})\in\I_0,$$
and then
$$y_1^{x^{s'}g_1^-}\cdots y_{t}^{x^{s'}g_t^-}y_{t+1}^{x^sg_{t+1}^+}\cdots y_n^{x^sg_n^+}(\Y^{\bh_+}-\Y^{\bh_-})\in\I_0.$$
If $s'>0$, repeat the above process, and we eventually obtain
$$y_1^{g_1^-}\cdots y_{t}^{g_t^-}y_{t+1}^{x^sg_{t+1}^+}\cdots y_n^{x^sg_n^+}(\Y^{\bh_+}-\Y^{\bh_-})\in\I_0.$$
Since $\deg(g_i^-)\le\deg(h_i^-), 1\le i\le t$ and $s+\deg(g_i^+)\le\deg(h_i^+), t+1\le i\le n$, then by the properties of radical well-mixed $\D$-ideals, we have
\begin{equation}\label{eq5}
\Y^{\bh_-}(\Y^{\bh_+}-\Y^{\bh_-})\in \I_0.
\end{equation}

Similarly, we also have
\begin{equation}\label{eq6}
\Y^{\bh_+}(\Y^{\bh_+}-\Y^{\bh_-})\in\I_0.
\end{equation}
Combining (\ref{eq5}) and (\ref{eq6}), we obtain $(\Y^{\bh_+}-\Y^{\bh_-})^2\in \I_0$, and hence $\Y^{\bh_+}-\Y^{\bh_-}\in\I_0$.
So we complete the proof.
\end{proof}

\begin{cor}\label{thm2}
Let $L\subseteq\Z[x]^n$ be a $\Z[x]$-lattice such that $\I_L$ is well-mixed, then $\I_L$ is finitely generated as a radical well-mixed $\D$-ideal.
\end{cor}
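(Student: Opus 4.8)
The plan is to deduce this immediately from Theorem \ref{thm1} by showing that when $\I_L$ is already well-mixed, its radical well-mixed closure $\langle\I_L\rangle_r$ does not actually enlarge the difference ideal $[\I_L]$ in any essential way — more precisely, that $\I_L$ and $\langle\I_L\rangle_r$ are generated by the same finite set as radical well-mixed $\D$-ideals. First I would invoke Theorem \ref{thm1} to obtain a finite set $F=\cup_{\tau\in\Lambda_0}F_\tau\subseteq\I_L$ with $\langle F\rangle_r=\langle\I_L\rangle_r$. Since $F\subseteq\I_L$ and $\I_L$ is well-mixed, we have $\langle F\rangle\subseteq\I_L$, and hence $\langle F\rangle_r\subseteq\sqrt{\I_L}$. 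The key point is then that $\I_L$ is radical: by the Lemma on normal binomial $\D$-ideals (a lattice $\D$-ideal $\I_L$ is normal, being the binomial $\sigma$-ideal attached to the trivial partial character, hence radical), we get $\sqrt{\I_L}=\I_L$, so $\langle F\rangle_r\subseteq\I_L$. Combined with the reverse inclusion $\I_L\subseteq\langle\I_L\rangle_r=\langle F\rangle_r$, this yields $\I_L=\langle F\rangle_r$, i.e. the finite set $F$ generates $\I_L$ as a radical well-mixed $\D$-ideal.

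The one thing to check carefully is that $\I_L$ really is normal, hence radical: I would note that $\I_L=\I_L(\rho)$ for the trivial partial character $\rho$ on $L$, that by the correspondence of \cite{gao-dbi} any $\I_L(\rho)$ with $\I_L(\rho)$ equal to its own $\mb$-saturation is normal, and that for a well-mixed $\I_L$ one can argue $\I_L:\mb=\I_L$ directly (if $M(\Y^{\bf_+}-\Y^{\bf_-})\in\I_L$ for some $M\in\mb$, the well-mixed property lets one strip off the monomial factor). Alternatively, and more cleanly, one observes that Lemma \ref{lm0} already tells us $\langle\I_L\rangle_r:\mb$ is either $[1]$ or $\I_{\sat_M(L)}$, and since $\I_L$ well-mixed forces (via Lemma \ref{lm2}) $L$ to be M-saturated, we get $\sat_M(L)=L$, so $\langle\I_L\rangle_r$ is already normal — in particular radical — and its $\mb$-saturation equals $\I_L$; this pins down $\langle\I_L\rangle_r=\I_L$ when the closure is not $[1]$, and the $[1]$ case is trivial.

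I do not expect a serious obstacle here; the corollary is essentially a bookkeeping consequence of Theorem \ref{thm1} together with the normality/radicality facts already recorded in the preliminaries. The only mild subtlety is making sure the notion ``finitely generated as a radical well-mixed $\D$-ideal'' is the one being transferred — i.e. that the finite generating set $F$ produced by Theorem \ref{thm1} lies inside $\I_L$ (which it does by construction, since each $F_\tau$ consists of pure binomials $\Y^{\bg_+}-\Y^{\bg_-}$ with $\bg\in L$) — so that $\langle F\rangle_r=\I_L$ rather than merely $\langle F\rangle_r\supseteq\I_L$. Once that inclusion is noted, the argument closes in a few lines.
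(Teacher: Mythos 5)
Your argument is correct and is essentially the paper's own proof, which simply observes that $\I_L$ is already a radical well-mixed $\D$-ideal (well-mixed by hypothesis, radical because a lattice $\D$-ideal is a normal binomial $\D$-ideal), so that $\langle\I_L\rangle_r=\I_L$ and Theorem \ref{thm1} applies directly. Your version merely spells out the bookkeeping (that the finite generating set $F$ lies in $\I_L$ and that $\langle F\rangle_r\subseteq\I_L$ by minimality) that the paper leaves implicit.
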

\begin{proof}
It is immediate from Theorem \ref{thm1} since $\I_L$ is already a radical well-mixed $\D$-ideal.
\end{proof}

\begin{example}\label{ex1}
Let $L=(\begin{pmatrix}x-1\\1-x\end{pmatrix})\subseteq\Z[x]^2$ be a $\Z[x]$-lattice. $\I_L$ is a $\D$-prime $\D$-ideal.
Then $\I_L=[y_1^{x^{i}}y_2-y_1y_2^{x^{i}}:i\in\N^*]=\langle y_1^xy_2-y_1y_2^x\rangle_r$.
\end{example}

\begin{example}\label{ex2}
Let $L=(\begin{pmatrix}x^2+1-x\\x-x^2-1\end{pmatrix})\subseteq\Z[x]^2$ be a $\Z[x]$-lattice. $\I_L$ is a $\D$-prime $\D$-ideal.
Then $\I_L=\langle y_1^{x^2+1}y_2^x-y_1^xy_2^{x^2+1},y_1^{x^3+1}-y_2^{x^3+1}\rangle_r$.
\end{example}

\begin{example}\label{ex4}
Let $L=(\begin{pmatrix}x^2+1-x\\x-1\end{pmatrix})\subseteq\Z[x]^2$ be a $\Z[x]$-lattice. $\I_L$ is a $\D$-prime $\D$-ideal.
Then $\I_L=\langle y_1^{x^2+1}y_2^x-y_1^xy_2,y_1^{x^3+1}y_2^{x^2}-y_2\rangle_r$.
\end{example}

To show radical well-mixed $\D$-ideals generated by any binomials are finitely generated, we need the following lemma.
\begin{lemma}[\cite{Jie}, Proposition 5.2]\label{lm6}
Let $F$ and $G$ be subsets of any $\D$-ring $R$. Then
\[\langle F\rangle_r\cap\langle G\rangle_r=\langle FG\rangle_r.\]
As a corollary, if $I$ and $J$ are two $\D$-ideals of $R$, then
\[\langle I\rangle_r\cap\langle J\rangle_r=\langle I\cap J\rangle_r=\langle IJ\rangle_r.\]
\end{lemma}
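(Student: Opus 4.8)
The plan is to prove the two inclusions separately, with essentially all of the work concentrated in one of them. The inclusion $\langle FG\rangle_r\subseteq\langle F\rangle_r\cap\langle G\rangle_r$ is immediate: for $f\in F$ and $g\in G$ we have $fg\in(F)\subseteq\langle F\rangle_r$ and $fg\in(G)\subseteq\langle G\rangle_r$, so $FG$ lies inside the radical well-mixed $\D$-ideal $\langle F\rangle_r\cap\langle G\rangle_r$ (recall that an intersection of radical well-mixed $\D$-ideals is again one, so $\langle\cdot\rangle_r$ is well defined), and therefore so is $\langle FG\rangle_r$.

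For the reverse inclusion, write $N=\langle FG\rangle_r$. The crux is to show $\langle F\rangle_r\cdot\langle G\rangle_r\subseteq N$; once this is known, any $a\in\langle F\rangle_r\cap\langle G\rangle_r$ satisfies $a^2=a\cdot a\in N$, and radicality of $N$ forces $a\in N$. To establish $\langle F\rangle_r\cdot\langle G\rangle_r\subseteq N$ I would run an ideal--transporter argument twice. First fix $g\in G$ and put $I_g:=\{a\in R\mid ag\in N\}$. It contains $F$ (since $fg\in FG\subseteq N$) and is plainly an ideal; I would check it is also closed under $\sigma$, well-mixed, and radical, as follows. Closure under $\sigma$: if $ag\in N$, reading the product as $g\cdot a\in N$ and using that $N$ is well-mixed gives $g\cdot a^{x}\in N$, i.e. $a^{x}g\in N$. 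Well-mixedness: if $a_1a_2g\in N$, reading this as $(a_1g)\cdot a_2\in N$ and applying well-mixedness of $N$ gives $(a_1g)\cdot a_2^{x}\in N$, i.e. $a_1a_2^{x}\in I_g$. Radicality: if $a^m g\in N$, multiply by $g^{m-1}$ to get $(ag)^m=a^m g^m\in N$, hence $ag\in N$ since $N$ is radical, i.e. $a\in I_g$. Thus $I_g$ is a radical well-mixed $\D$-ideal containing $F$, so $\langle F\rangle_r\subseteq I_g$, which means $ag\in N$ for every $a\in\langle F\rangle_r$ and every $g\in G$.

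Now fix $a\in\langle F\rangle_r$ and repeat the same argument with $I'_a:=\{b\in R\mid ab\in N\}$: the identical four checks show $I'_a$ is a radical well-mixed $\D$-ideal, and it contains $G$ by the previous step, so $\langle G\rangle_r\subseteq I'_a$. Hence $ab\in N$ for all $a\in\langle F\rangle_r$ and $b\in\langle G\rangle_r$, i.e. $\langle F\rangle_r\cdot\langle G\rangle_r\subseteq N$, which completes the main equality. For the corollary, applying the equality to the $\D$-ideals $I$ and $J$ gives $\langle I\rangle_r\cap\langle J\rangle_r=\langle IJ\rangle_r$; on the other hand $IJ\subseteq I\cap J$ gives $\langle IJ\rangle_r\subseteq\langle I\cap J\rangle_r$, while $I\cap J\subseteq I$ and $I\cap J\subseteq J$ give $\langle I\cap J\rangle_r\subseteq\langle I\rangle_r\cap\langle J\rangle_r=\langle IJ\rangle_r$, so the three coincide.

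The only delicate point is verifying that the transporters $I_g$ and $I'_a$ are closed under $\sigma$ and well-mixed. Both rely on the same elementary observation: since $R$ is commutative, a product lying in a well-mixed ideal can be split as $u\cdot v$ with \emph{either} factor designated as the distinguished variable, so that $uv\in N$ yields both $uv^{x}\in N$ and $u^{x}v\in N$. Apart from this, and the one-line trick of multiplying by $g^{m-1}$ to get radicality, the argument is routine. It is also worth recording at the outset that intersections of radical well-mixed $\D$-ideals are radical well-mixed $\D$-ideals, so that the closure operator $\langle\cdot\rangle_r$ makes sense.
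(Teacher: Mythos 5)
Your proof is correct. Note that the paper does not actually prove this lemma --- it is quoted from \cite{Jie} (Proposition 5.2) and the ``proof'' in the text is just a pointer to that reference --- so there is no in-paper argument to compare against; your transporter argument is the natural and complete one. All the verifications go through: the two transporters $I_g$ and $I'_a$ are ideals containing $F$ (resp.\ $G$), and commutativity lets you designate either factor of a product as the one being twisted by $\D$, which gives both $\D$-stability and well-mixedness of the transporter from well-mixedness of $N$; the multiplication by $g^{m-1}$ (resp.\ $a^{m-1}$) correctly reduces radicality of the transporter to radicality of $N$; and the deduction $a\in\langle F\rangle_r\cap\langle G\rangle_r\Rightarrow a^2\in N\Rightarrow a\in N$, together with the easy inclusion and the sandwich $IJ\subseteq I\cap J\subseteq I,J$ for the corollary, is sound.
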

\begin{proof}
For the proof, please refer to \cite{Jie}.
\end{proof}

\begin{lemma}\label{lm8}
Suppose $I\subseteq K\{y_1,\ldots,y_n\}$ is a pure binomial $\D$-ideal. Then $\langle I\rangle_r:\mb$ is finitely generated as a radical well-mixed $\D$-ideal.
\end{lemma}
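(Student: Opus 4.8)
The plan is to reduce the statement about an arbitrary pure binomial $\D$-ideal $I$ to the lattice-ideal case already handled in Theorem~\ref{thm1}, exploiting the correspondence between normal binomial $\sigma$-ideals and partial characters together with Lemma~\ref{lm6}. First I would pass to the normal $\sigma$-ideal $I:\mb$; since $\langle I\rangle_r:\mb = \langle I:\mb\rangle_r:\mb$ and $I:\mb$ is again a pure binomial $\D$-ideal (saturating a binomial ideal by the coordinate monomials keeps it binomial), there is no loss in assuming $I$ is already normal. By the structure theorem of \cite{gao-dbi} quoted in the excerpt, a normal pure binomial $\sigma$-ideal is of the form $\I_L(\rho)$ for a partial character $\rho$ on a $\Z[x]$-lattice $L$; in the \emph{pure} binomial case one should be able to arrange $\rho$ to be trivial, or at least to reduce to finitely many translates of a lattice $\D$-ideal after extending the base $\sigma$-field to be algebraically closed and inversive (the hypotheses under which Lemmas~\ref{lm2} and~\ref{lm0} apply).

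Second, with $I$ replaced by $\I_L$, I would invoke Lemma~\ref{lm0}: $\langle\I_L\rangle_r:\mb$ is either $[1]$ or the normal binomial $\sigma$-ideal $\I_{\sat_M(L)}$. In the first case there is nothing to prove; in the second case, $\sat_M(L)$ is again a $\Z[x]$-lattice in $\Z[x]^n$, and by Lemma~\ref{lm2} the lattice $\D$-ideal $\I_{\sat_M(L)}$ is well-mixed (this is exactly the M-saturated case). Then Corollary~\ref{thm2} applies verbatim: $\I_{\sat_M(L)}$ is finitely generated as a radical well-mixed $\D$-ideal. This finishes the trivial-character subcase.

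Third, to get back from the extended base field to $K$ and from $\I_L(\rho)$ to $\I_L$, I would use Lemma~\ref{lm6}: decomposing $\langle I\rangle_r$ as an intersection (equivalently, product) of finitely many radical well-mixed $\D$-ideals of the controlled shape $\langle\I_{L}(\rho_j)\rangle_r$, and noting that $\langle F\rangle_r\cap\langle G\rangle_r=\langle FG\rangle_r$ turns a finite intersection of finitely generated radical well-mixed $\D$-ideals into a finitely generated one (take products of the finite generating sets). The descent from $\overline{K}$ to $K$ should follow by a standard argument: a finite generating set over $\overline{K}$ has coefficients in a finite extension, and one collects Galois conjugates (or uses that $\langle\cdot\rangle_r$ commutes with the relevant base change for binomial ideals) to obtain a finite generating set defined over $K$.

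The main obstacle I anticipate is the passage from a genuine partial character $\rho$ (nontrivial values in $K^*$) to something built out of \emph{lattice} $\D$-ideals, so that Theorem~\ref{thm1}/Corollary~\ref{thm2} — which are stated only for $\I_L$ — can be applied. One expects that after adjoining finitely many root-of-unity–type solutions of the equations $\Y^{\bf_+}=\rho(\bf)\Y^{\bf_-}$ (for a finite generating set of $L$) to the base field, the ideal $\I_L(\rho)$ becomes, up to monomial saturation, an intersection of finitely many translates of $\I_{\sat_M(L)}$ by torsion points; making this precise, and checking that each translate is again of the form covered by Corollary~\ref{thm2} while keeping track of the well-mixedness, is the delicate bookkeeping step. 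Everything else is a routine combination of the quoted lemmas.
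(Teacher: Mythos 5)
Your proposal follows essentially the same route as the paper: pass to $\langle I\rangle_r:\mb=\langle I:\mb\rangle_r:\mb$, identify this via Lemma~\ref{lm0} as either $[1]$ or $\I_{\sat_M(L)}$, and conclude with Corollary~\ref{thm2}. The differences are cosmetic: because $I$ is a \emph{pure} binomial $\D$-ideal, $I:\mb$ is already a lattice ideal $\I_L$ with trivial character, so the partial-character/Galois-descent machinery you anticipate as the "main obstacle" never arises, and the paper gets the required well-mixedness not from Lemma~\ref{lm2} but from the direct observation that $\langle I\rangle_r:\mb$ inherits radical well-mixedness from $\langle I\rangle_r$.
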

\begin{proof}
Since $I:\mb$ is a normal binomial $\D$-ideal, there exists a $\Z[x]$-lattice $L$ such that $I:\mb=\I_L$. Note that $\langle I\rangle_r:\mb$=$\langle I:\mb\rangle_r:\mb$, so by Lemma \ref{lm0}, $\langle I\rangle_r:\mb$ is $[1]$ or $\I_{\sat_M(L)}$. Since $\langle I\rangle_r$ is radical well-mixed, it is easy to show that $\langle I\rangle_r:\mb$ is radical well-mixed. So by Corollary \ref{thm2}, $\langle I\rangle_r:\mb$ is finitely generated as a radical well-mixed $\D$-ideal.
\end{proof}

\begin{lemma}\label{lm1}
Suppose $I\subseteq K\{y_1,\ldots,y_n\}$ is a pure binomial $\D$-ideal. Then
$$\langle I\rangle_r=\langle I\rangle_r:\mb\cap\langle I,y_{p_1}^{x^{a_1}}\rangle_r\cap\cdots\cap\langle I,y_{p_l}^{x^{a_l}}\rangle_r$$
for some $\{p_1,\ldots,p_l\}\subseteq\{1,\ldots,n\}$ and some $(a_1,\ldots,a_l)\in\N^l$.
\end{lemma}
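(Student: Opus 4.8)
The plan is to peel off one variable at a time, using a colon-ideal decomposition adapted to radical well-mixed $\D$-ideals. Recall the elementary fact that for an ordinary ideal $J$ in a Noetherian ring and an element $f$, one has $J=(J:f)\cap(J,f)$ whenever $J$ is radical; I would establish the $\D$-analogue for radical well-mixed $\D$-ideals: if $\J$ is radical well-mixed and $f\in K\{\Y\}$, then
\[\J=(\J:f)\cap\langle \J,f\rangle_r,\]
provided $\J:f$ is again radical well-mixed (which it is when $\J$ is, since $gf,g'f\in\J$ with $g\in\J:f$ forces $g f^x\in\J$ by well-mixedness and $g^2\in\J:f$ by radicality). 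The inclusion ``$\subseteq$'' is trivial; for ``$\supseteq$'', take $h$ in the right-hand side, so $hf\in\J$ and $h\in\langle\J,f\rangle_r$. Then $h^k=a+bf$ for some $a\in\J$, $b\in K\{\Y\}$ (after passing to a power, using that $\langle\J,f\rangle_r$ is the radical well-mixed closure; here one needs that elements of $\langle\J,f\rangle_r$ have a power lying in a well-mixed ideal containing $\J$ and $f$, which is where the argument is most delicate). Multiplying by $h$ gives $h^{k+1}=ah+bhf\in\J$, so $h\in\J$ by radicality.

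Granting this, I would apply it iteratively with $f=y_i^{x^{a}}$ for suitable indices $i$ and exponents $a$. Starting from $\langle I\rangle_r$, the key point is that $\langle I\rangle_r:\mb=\bigcup_{M\in\mb}(\langle I\rangle_r:M)$ and $\mb$ is generated by the $y_i^{x^j}$, so $\langle I\rangle_r:\mb$ is the ascending union of the colon ideals $\langle I\rangle_r:M$ over $M\in\mb$. By Lemma \ref{lm8}, $\langle I\rangle_r:\mb$ is finitely generated as a radical well-mixed $\D$-ideal; since it contains $\langle I\rangle_r$ and each generator lies in some $\langle I\rangle_r:M$ with $M$ a product of finitely many $y_i^{x^j}$, there is a \emph{single} monomial $M_0=y_{p_1}^{x^{a_1}}\cdots y_{p_l}^{x^{a_l}}\in\mb$ with $\langle I\rangle_r:\mb=\langle I\rangle_r:M_0$. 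Now decompose along the factors of $M_0$ one at a time: $\langle I\rangle_r=(\langle I\rangle_r:y_{p_1}^{x^{a_1}})\cap\langle I,y_{p_1}^{x^{a_1}}\rangle_r$, then apply the identity again to the colon term with $y_{p_2}^{x^{a_2}}$, and so on. After $l$ steps the iterated colon ideal is exactly $\langle I\rangle_r:M_0=\langle I\rangle_r:\mb$, and the intersection has collected the terms $\langle I,y_{p_j}^{x^{a_j}}\rangle_r$ — note that at each stage the extra term is $\langle(\langle I\rangle_r:M'),y_{p_j}^{x^{a_j}}\rangle_r$, but this contains $\langle I,y_{p_j}^{x^{a_j}}\rangle_r$ and is contained in it after one more reduction since $y_{p_j}^{x^{a_j}}$ kills the colon, so these coincide. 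This yields precisely the asserted formula.

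The main obstacle I anticipate is the careful handling of the radical well-mixed closure in the identity $\J=(\J:f)\cap\langle\J,f\rangle_r$: unlike the purely algebraic case, ``$h\in\langle\J,f\rangle_r$'' does not immediately give a clean expression $h^k=a+bf$, because the radical well-mixed closure is built by transfinite iteration of the radical, well-mixed, and $\D$-closure operations. One has to argue that on the element $h$ with $hf\in\J$, each of these operations, applied to an ideal between $\J$ and $\langle\J,f\rangle_r$, keeps $h$ inside $\J$ — essentially, $\J:f$ is already radical, well-mixed, and $\D$-stable (the last because $\J$ is $\D$-stable and $\sigma(f)$ again lies in $\mb$-type position), hence $\J:f\supseteq\langle\J,f\rangle_r\cap(\text{things multiplied into }\J\text{ by }f)$ can be pushed through. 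Making this bookkeeping precise — ideally by invoking Lemma \ref{lm6} to rewrite $(\J:f)\cap\langle\J,f\rangle_r=\langle(\J:f)(\J,f)\rangle_r$ and checking $(\J:f)(\J,f)\subseteq\J$ directly, since $(\J:f)\cdot f\subseteq\J$ and $(\J:f)\cdot\J\subseteq\J$ — is the step that needs the most care, but Lemma \ref{lm6} makes it quite clean.
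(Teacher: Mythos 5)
Your proposal is correct in substance and reaches the lemma by a somewhat different route than the paper, though both arguments rest on the same two inputs: Lemma \ref{lm8} (finite generation of $\langle I\rangle_r:\mb$) and Lemma \ref{lm6} (the product formula for intersections of radical well-mixed closures). The paper never forms colon ideals by single elements: it writes $\langle I\rangle_r:\mb=\langle f_1,\ldots,f_s\rangle_r$ with $m_if_i\in\langle I\rangle_r$, $m_i\in\mb$, and splits directly via Lemma \ref{lm6}, $\langle I\rangle_r=\langle I,f_1\rangle_r\cap\langle I,m_1\rangle_r=\cdots=\langle f_1,\ldots,f_s\rangle_r\cap\langle I,m_1\cdots m_s\rangle_r$, finally breaking the monomial $m_1\cdots m_s$ into its variable-power factors, again by Lemma \ref{lm6}. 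You instead iterate the splitting $J=(J:f)\cap\langle J,f\rangle_r$ with $f=y_{p_j}^{x^{a_j}}$, which is cleanest when proved exactly as you suggest at the end: $(J:f)\cap\langle J,f\rangle_r=\langle (J:f)\cdot(J\cup\{f\})\rangle_r\subseteq\langle J\rangle_r=J$, the reverse inclusion being trivial. Two details in your write-up need repair, though neither is fatal. First, the $\sigma$-stability of $J:f$ does not come from ``$\sigma(f)$ lying in $\mb$-type position''; it comes from the symmetric form of well-mixedness: $gf\in J$ gives $g^xf\in J$, hence $g^x\in J:f$ (radicality and well-mixedness of $J:f$ are checked similarly, so Lemma \ref{lm6} applies with $\langle J:f\rangle_r=J:f$). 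Second, your claim that $\langle(\langle I\rangle_r:M'),y_{p_j}^{x^{a_j}}\rangle_r$ \emph{equals} $\langle I,y_{p_j}^{x^{a_j}}\rangle_r$ is unjustified --- it would require every $g$ with $gM'\in\langle I\rangle_r$ to lie in $\langle I,y_{p_j}^{x^{a_j}}\rangle_r$, and $M'$ involves none of the variables $y_{p_j}^{x^{a_j}}$ ``kills'' --- but it is also unnecessary: the inclusion $\langle I\rangle_r\subseteq\langle I\rangle_r:\mb\cap\bigcap_k\langle I,y_{p_k}^{x^{a_k}}\rangle_r$ is trivial, and the nontrivial reverse inclusion only needs each $\langle I,y_{p_k}^{x^{a_k}}\rangle_r$ to be \emph{contained in} the corresponding term of your iterated decomposition, which it is. Your reduction of $\langle I\rangle_r:\mb$ to a single colon $\langle I\rangle_r:M_0$ is sound for the same reason the splitting is: $\langle I\rangle_r:M_0$ is itself a radical well-mixed $\D$-ideal containing the finitely many generators.
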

\begin{proof}
By Lemma \ref{lm8}, $\langle I\rangle_r:\mb$ is finitely generated as a radical well-mixed $\D$-ideal. Therefore, there exist $f_1,\ldots,f_s\in\langle I\rangle_r:\mb$ and $m_1,\ldots,m_s\in\mb$ such that $\langle I\rangle_r:\mb=\langle f_1,\ldots,f_s\rangle_r$ and $m_1f_1,\ldots,m_sf_s\in\langle I\rangle_r$. Then by Lemma \ref{lm6},
\begin{align*}
\langle I\rangle_r&=\langle I,f_1\rangle_r\cap\langle I,m_1\rangle_r\\
&=\langle I,f_1,f_2\rangle_r\cap\langle I,f_1,m_2\rangle_r\cap\langle I,m_1\rangle_r\\
&=\langle I,f_1,f_2\rangle_r\cap\langle I,m_1m_2\rangle_r\\
&=\cdots\\
&=\langle f_1,\ldots,f_s\rangle_r\cap\langle I,m_1\cdots m_s\rangle_r\\
&=\langle I\rangle_r:\mb\cap\langle I,y_{p_1}^{x^{a_1}}\rangle_r\cap\cdots\cap\langle I,y_{p_l}^{x^{a_l}}\rangle_r,
\end{align*}
for some $\{p_1,\ldots,p_l\}\subseteq\{1,\ldots,n\}$ and some $(a_1,\ldots,a_l)\in\N^l$.
\end{proof}

Suppose $\{j_1,\ldots,j_t\}\subseteq\{1,\ldots,n\}$, $(a_1,\ldots,a_t)\in\N^t$ and $I_0\subseteq K\{y_1,\ldots,y_n\}$ is a pure binomial $\D$-ideal. Denote $T_{j_1\ldots j_t}^{a_1\ldots a_t}:=\{y_1^{f_1}\cdots y_{n}^{f_n}:f_1,\ldots,f_n\in\N[x],\deg(f_{j_i})<a_i,1\le i\le t\}$. We say $I_0$ is {\em saturated} with respect to $\{y_{j_1}^{x^{a_1}},\ldots,y_{j_t}^{x^{a_t}}\}$ if $I_0=I_0:T_{j_1\ldots j_t}^{a_1\ldots a_t}$, that is, for $g\in K\{y_1,\ldots,y_n\}$ and $M\in T_{j_1\ldots j_t}^{a_1\ldots a_t}$, $Mg\in I_0$ implies $g\in I_0$. Let $I\subseteq K\{y_1,\ldots,y_n\}$ be a pure binomial $\D$-ideal. The minimal $\D$-ideal containing $I$ which is saturated with respect to $\{y_{j_1}^{x^{a_1}},\ldots,y_{j_t}^{x^{a_t}}\}$ is called the {\em $T_{j_1\ldots j_t}^{a_1\ldots a_t}$-saturated closure} of $I$, denoted by $N_{j_1\ldots j_t}^{a_1\ldots a_t}(I)$. We will give a concrete description of the $T_{j_1\ldots j_t}^{a_1\ldots a_t}$-saturated closure of a pure binomial $\D$-ideal $I$. Let $I^{[0]}=I$ and recursively define $I^{[i]}=[I^{[i-1]}:T_{j_1\ldots j_t}^{a_1\ldots a_t}](i=1,2,\ldots)$. The following lemma is easy to check by definition.
\begin{lemma}\label{lm9}
Let $I\subseteq K\{y_1,\ldots,y_n\}$ be a pure binomial $\D$-ideal. Then
\begin{equation}\label{eq1}
N_{j_1\ldots j_t}^{a_1\ldots a_t}(I)=\cup_{i=0}^{\infty}I^{[i]}.
\end{equation}
\end{lemma}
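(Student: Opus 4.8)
The plan is to set $J:=\bigcup_{i=0}^{\infty}I^{[i]}$ and check directly from the definitions that $J$ is the smallest $\sigma$-ideal containing $I$ that is saturated with respect to $\{y_{j_1}^{x^{a_1}},\ldots,y_{j_t}^{x^{a_t}}\}$; throughout write $T=T_{j_1\ldots j_t}^{a_1\ldots a_t}$. First I would note that $1\in T$ (since $\deg(0)=-1<a_i$), so that $I^{[i-1]}\subseteq I^{[i-1]}:T\subseteq[I^{[i-1]}:T]=I^{[i]}$; thus $I=I^{[0]}\subseteq I^{[1]}\subseteq\cdots$ is an increasing chain, and its union $J$ is therefore a $\sigma$-ideal (an increasing union of $\sigma$-ideals) with $I\subseteq J$. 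It is also worth recording that $T$ is multiplicatively closed — there is no cancellation of leading terms among exponents in $\N[x]$, so the degree of a sum is the maximum of the degrees — hence each $I^{[i]}:T$ is an honest ideal.

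Next I would show $J$ is $T$-saturated. Suppose $Mg\in J$ with $M\in T$. Because the chain $(I^{[i]})$ is increasing, $Mg\in I^{[i]}$ for some $i$; hence $g\in I^{[i]}:T\subseteq[I^{[i]}:T]=I^{[i+1]}\subseteq J$. So $J$ is a $T$-saturated $\sigma$-ideal containing $I$, and by the minimality in the definition of $N_{j_1\ldots j_t}^{a_1\ldots a_t}(I)$ we get $N_{j_1\ldots j_t}^{a_1\ldots a_t}(I)\subseteq J$.

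For the reverse inclusion I would prove $I^{[i]}\subseteq N_{j_1\ldots j_t}^{a_1\ldots a_t}(I)$ for every $i$ by induction. The case $i=0$ is $I^{[0]}=I\subseteq N_{j_1\ldots j_t}^{a_1\ldots a_t}(I)$. For the step, assume $I^{[i]}\subseteq N_{j_1\ldots j_t}^{a_1\ldots a_t}(I)$; if $p\in I^{[i]}:T$, pick $M\in T$ with $Mp\in I^{[i]}\subseteq N_{j_1\ldots j_t}^{a_1\ldots a_t}(I)$, and then $p\in N_{j_1\ldots j_t}^{a_1\ldots a_t}(I)$ by $T$-saturatedness of the latter; since $N_{j_1\ldots j_t}^{a_1\ldots a_t}(I)$ is already a $\sigma$-ideal, this gives $I^{[i+1]}=[I^{[i]}:T]\subseteq N_{j_1\ldots j_t}^{a_1\ldots a_t}(I)$. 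Taking the union over $i$ yields $J\subseteq N_{j_1\ldots j_t}^{a_1\ldots a_t}(I)$, hence equality, which is (\ref{eq1}).

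Since the whole argument is a direct unwinding of the definitions, there is no genuine obstacle; the only two points requiring a moment's attention are that the chain $(I^{[i]})$ is increasing (so that membership in $J$ is witnessed inside a single $I^{[i]}$, which is exactly what makes the ``one more step'' argument show $J$ is saturated), and that $I_0:T$ need not itself be $\sigma$-closed — applying $\sigma$ raises the degree of each exponent by one — which is precisely why the recursion must pass to the generated $\sigma$-ideal $[\,\cdot\,]$ at each stage and why a single application does not already produce the saturated closure.
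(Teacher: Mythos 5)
Your proof is correct and is exactly the routine verification the paper has in mind (the paper simply asserts the lemma ``is easy to check by definition'' and gives no proof): the increasing union is a $T$-saturated $\sigma$-ideal containing $I$, and induction using saturatedness of $N_{j_1\ldots j_t}^{a_1\ldots a_t}(I)$ gives the reverse inclusion. Your side remarks --- that $1\in T$ makes the chain increasing, that $T$ is multiplicatively closed because exponents in $\N[x]$ cannot cancel, and that one must pass to $[\,\cdot\,]$ at each stage since $I^{[i]}:T$ need not be $\sigma$-closed --- are all accurate and fill in precisely the details the paper omits.
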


Let $I_0\subseteq K\{y_1,\ldots,y_n\}$ be a pure binomial $\D$-ideal. Then we say $I=\langle I_0,y_{j_1}^{x^{a_1}},\ldots,y_{j_t}^{x^{a_t}}\rangle_r$ is {\em quasi-normal} if $I_0$ is saturated with respect to $\{y_{j_1}^{x^{a_1}},\ldots,y_{j_t}^{x^{a_t}}\}$ and for any binomial $\Y^{\bf}-\Y^{\bg}\in I_0$, if $\Y^{\bf}\in[y_{j_1}^{x^{a_1}},\ldots,y_{j_t}^{x^{a_t}}]$, then $\Y^{\bg}\in[y_{j_1}^{x^{a_1}},\ldots,y_{j_t}^{x^{a_t}}]$.
Similarly to Theorem \ref{thm1}, we can prove the following useful lemma.
\begin{lemma}\label{lm5}
Let $\{j_1,\ldots,j_t\}\subseteq\{1,\ldots,n\}$, $(a_1,\ldots,a_t)\in\N^t$ and $I_0\subseteq K\{y_1,\ldots,y_n\}$ a pure binomial $\D$-ideal. Assume that $I=\langle I_0,y_{j_1}^{x^{a_1}},\ldots,y_{j_t}^{x^{a_t}}\rangle_r$ is quasi-normal. Then $I$ is finitely generated as a radical well-mixed $\D$-ideal.
\end{lemma}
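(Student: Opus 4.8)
The plan is to follow the proof of Theorem~\ref{thm1} almost verbatim, with quasi-normality taking over the two roles played there by the lattice structure of $L$: the fact that $\Y^{\bf_+}-\Y^{\bf_-}\in\I_L$ for every $\bf\in L$, and the closure of $L$ under $\bh\mapsto\bh-x^s\bg$. Write $J=[y_{j_1}^{x^{a_1}},\ldots,y_{j_t}^{x^{a_t}}]$; as an algebraic ideal, $J=(y_{j_i}^{x^b}\mid 1\le i\le t,\ b\ge a_i)$, so a monomial $\Y^{\bf}$ lies in $J$ exactly when $\deg(f_{j_i})\ge a_i$ for some $i$, and every such monomial lies in $I:=\langle I_0,y_{j_1}^{x^{a_1}},\ldots,y_{j_t}^{x^{a_t}}\rangle_r$. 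Call a binomial \emph{bounded} if $\deg(f_{j_i})<a_i$ and $\deg(g_{j_i})<a_i$ for all $i$, i.e.\ if neither of its monomials lies in $J$. By the second clause of quasi-normality (applied to a binomial of $I_0$ and to its negative), every binomial of $I_0$ is either bounded or has both monomials in $J$, and in the latter case it already lies in $I$. Hence it suffices to exhibit a \emph{finite} set $F'$ of pure binomials, each lying in $I_0$, such that every bounded binomial of $I_0$ belongs to $I':=\langle F',y_{j_1}^{x^{a_1}},\ldots,y_{j_t}^{x^{a_t}}\rangle_r$; then $I_0\subseteq I'$ and hence $I=I'$, which is generated by the finite set $F'\cup\{y_{j_1}^{x^{a_1}},\ldots,y_{j_t}^{x^{a_t}}\}$.

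To build $F'$, I would repeat the construction of $\cup_{\tau\in\Lambda_0}F_\tau$ from Theorem~\ref{thm1}, now indexing over bounded binomials: identifying a reduced bounded binomial $\Y^{\bf}-\Y^{\bg}\in I_0$ with $\bh=\bf-\bg\in\Z[x]^n$ and partitioning these according to the sign pattern $\tau$ of $(\lc(h_i))_{i=1}^{n}$, form for each $\tau$ the set of tuples $(\deg(h_1^+),\lc(h_1^+),\ldots,\deg(h_n^+),\lc(h_n^+),\deg(h_1^-),\ldots,\deg(h_n^-))$ and pick binomials realizing its finitely many minimal elements under the product order. The only difference from Theorem~\ref{thm1} is that on the coordinates $j_1,\ldots,j_t$ the entries $\deg(h_{j_i}^{\pm})$ range over the finite set $\{-1,0,\ldots,a_i-1\}$, which only helps; so $F'$ is finite exactly as before.

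I would then run the induction of Theorem~\ref{thm1} on $(\lt(h_1^+),\ldots,\lt(h_n^+))$ over bounded binomials $\Y^{\bh_+}-\Y^{\bh_-}\in I_0$, carrying out the identical reduction against an element of $F'$, and two verifications are needed. First, every monomial introduced during the reduction --- the multiplier $y_1^{e}y_2^{x^sg_2^+}\cdots y_n^{x^sg_n^-}$ and the common factor $y_1^{d_1}\cdots y_n^{d_n}$ --- is again bounded: the minimality in the choice of the index $j$ forces $s+\deg(g_i^+)\le\deg(h_i^+)$ for all $i$, and since $\deg(g_i^-)\le\deg(g_i^+)$, $\deg(g_i^-)\le\deg(h_i^-)$, $\deg(e)<\deg(h_j^+)$, every exponent produced in a coordinate $j_k$ has degree $\le\max\{\deg(h_{j_k}^+),\deg(h_{j_k}^-)\}<a_k$, so these monomials lie in $T_{j_1\ldots j_t}^{a_1\ldots a_t}$ (and if, nonetheless, some intermediate monomial enters $J$, the term it multiplies is in $I'$ through the generators $y_{j_i}^{x^{a_i}}$, so nothing is lost). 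Second, the reduction identity exhibits the remainder binomial, multiplied by the bounded common factor $y_1^{d_1}\cdots y_n^{d_n}\in T_{j_1\ldots j_t}^{a_1\ldots a_t}$, as an element of $I_0$; because $I_0$ is saturated with respect to $T_{j_1\ldots j_t}^{a_1\ldots a_t}$, the remainder binomial itself lies in $I_0$, it is again bounded, and its leading-term tuple is strictly smaller, so the induction hypothesis applies to it. The remaining steps --- peeling the bounded multipliers off with the radical well-mixed property of $I'$ to obtain first $\Y^{\bh_-}(\Y^{\bh_+}-\Y^{\bh_-})\in I'$ and $\Y^{\bh_+}(\Y^{\bh_+}-\Y^{\bh_-})\in I'$, then $(\Y^{\bh_+}-\Y^{\bh_-})^2\in I'$, hence $\Y^{\bh_+}-\Y^{\bh_-}\in I'$ --- are word-for-word those of Theorem~\ref{thm1}.

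The main obstacle is the bounded-coordinate bookkeeping in the reduction: one must check that the shifts $x^s,x^{s'},\ldots$ applied to the binomials of $F'$ never raise an exponent in a coordinate $j_i$ to degree $a_i$, so that it is the weaker property of being saturated with respect to $T_{j_1\ldots j_t}^{a_1\ldots a_t}$ --- rather than normality, which $I_0$ need not possess --- that is being invoked. The definition of quasi-normality is precisely what makes this bookkeeping go through: its second clause confines attention to bounded binomials, and its first clause supplies the cancellation needed to keep the remainders inside $I_0$.
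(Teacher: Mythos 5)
Your proposal is correct and takes essentially the same route as the paper: the paper's own proof simply sets $J=\{\Y^{\bh_+}-\Y^{\bh_-}\in I_0\mid \Y^{\bh_+},\Y^{\bh_-}\in T_{j_1\ldots j_t}^{a_1\ldots a_t}\}$ (your ``bounded'' binomials), asserts that $\langle J\rangle_r$ is finitely generated ``similarly to Theorem \ref{thm1}'', and concludes $I=\langle J,y_{j_1}^{x^{a_1}},\ldots,y_{j_t}^{x^{a_t}}\rangle_r$. Your write-up supplies precisely the bookkeeping the paper leaves implicit --- that quasi-normality reduces everything to bounded binomials and that saturation with respect to $T_{j_1\ldots j_t}^{a_1\ldots a_t}$ replaces the lattice closure $\bh-x^s\bg\in L$ in the reduction step --- and it does so correctly.
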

\begin{proof}
Let $J=\{\Y^{\bh_+}-\Y^{\bh_-}\in I_0\mid \Y^{\bh_+},\Y^{\bh_-}\in T_{j_1\ldots j_t}^{a_1\ldots a_t}\}$. Similarly to Theorem \ref{thm1}, we can prove $\langle J\rangle_r$ is finitely generated as a radical well-mixed $\D$-ideal. Thus $I=\langle J,y_{j_1}^{x^{a_1}},\ldots,y_{j_t}^{x^{a_t}}\rangle_r$ is finitely generated as a radical well-mixed $\D$-ideal.
\end{proof}

\begin{lemma}\label{lm3}
Suppose $\{j_1,\ldots,j_t\}\subseteq\{1,\ldots,n\}$, $(a_1,\ldots,a_t)\in\N^t$ and $I\subseteq K\{y_1,\ldots,y_n\}$ is a pure binomial $\D$-ideal. Let $I_0=N_{j_1\ldots j_t}^{a_1\ldots a_t}(I)$. Assume that $\langle I_0, y_{j_1}^{x^{a_1}},\ldots,y_{j_t}^{x^{a_t}}\rangle_r$ is quasi-normal. 
Then
$$\langle I,y_{j_1}^{x^{a_1}},\ldots,y_{j_t}^{x^{a_t}}\rangle_r=\langle I_0, y_{j_1}^{x^{a_1}},\ldots,y_{j_t}^{x^{a_t}}\rangle_r\cap\bigcap_{1\le k\le l}\langle I,y_{j_1}^{x^{a_1}},\ldots,y_{j_t}^{x^{a_t}},y_{p_k}^{x^{b_k}}\rangle_r,$$
where either $p_k\notin\{j_1,\ldots,j_t\}$, or $p_k=j_m$ and $b_k<a_m$ for $1\le k\le l$.
\end{lemma}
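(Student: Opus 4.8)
The plan is to imitate the iteration used in Lemma~\ref{lm1}, but now with the extra generators $y_{j_1}^{x^{a_1}},\ldots,y_{j_t}^{x^{a_t}}$ carried along throughout. Write $\mathcal{J}=\langle y_{j_1}^{x^{a_1}},\ldots,y_{j_t}^{x^{a_t}}\rangle_r$ as a fixed ``tail'' that is adjoined to every ideal in sight. By Lemma~\ref{lm9}, $I_0=N_{j_1\ldots j_t}^{a_1\ldots a_t}(I)=\cup_{i\ge0}I^{[i]}$, and each step $I^{[i]}=[I^{[i-1]}:T_{j_1\ldots j_t}^{a_1\ldots a_t}]$ is obtained from $I^{[i-1]}$ by adjoining finitely many binomials $g$ for which $Mg\in I^{[i-1]}$ with $M\in T_{j_1\ldots j_t}^{a_1\ldots a_t}$. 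The first thing I would establish is that this union stabilises after finitely many steps once we work modulo $\mathcal{J}$ in the radical well-mixed sense; more precisely, I expect $\langle I_0,y_{j_1}^{x^{a_1}},\ldots,y_{j_t}^{x^{a_t}}\rangle_r$ to equal $\langle I^{[N]},y_{j_1}^{x^{a_1}},\ldots,y_{j_t}^{x^{a_t}}\rangle_r$ for some $N$, which one can see either directly from the quasi-normality hypothesis on $\langle I_0,y_{j_1}^{x^{a_1}},\ldots,y_{j_t}^{x^{a_t}}\rangle_r$ or, alternatively, one can avoid proving stabilisation and instead run the argument one saturation step at a time.

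The heart of the argument is a single saturation step. Suppose $g=\Y^{\bf}-\Y^{\bg}$ is a binomial such that $Mg\in I'$ for some $M=\Y^{\bm}\in T_{j_1\ldots j_t}^{a_1\ldots a_t}$, where $I'$ is the pure binomial $\D$-ideal produced at the previous stage (to which the generators $y_{j_i}^{x^{a_i}}$ have already been adjoined on the radical well-mixed side). I want to show that
$$\langle I',g\rangle_r=\langle I'\rangle_r\cap\langle I',g,y_{p}^{x^{b}}\rangle_r$$
for a suitable finite collection of ``lower'' monomials $y_p^{x^b}$ — namely those dividing $M$, so that either $p\notin\{j_1,\ldots,j_t\}$ or $p=j_m$ with $b<a_m$, exactly matching the right-hand side of the statement. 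The mechanism is the same bookkeeping as in Lemma~\ref{lm1}: since $Mg\in\langle I'\rangle_r$, Lemma~\ref{lm6} gives $\langle Mg\rangle_r=\langle M\rangle_r\cap\langle g\rangle_r$, hence
$$\langle I',g\rangle_r=\langle I',Mg\rangle_r\cap\langle I',M\rangle_r=\langle I'\rangle_r\cap\langle I',M\rangle_r,$$
using $Mg\in\langle I'\rangle_r$, and then $\langle I',M\rangle_r$ is intersected down through the monomial factors of $M$: if $M=M'\cdot y_p^{x^b}$ then $\langle I',M\rangle_r=\langle I',M',y_p^{x^b}\rangle_r\cap\langle I',M\cdot(\textrm{residual})\rangle_r$, and iterating strips $M$ to $1$, contributing one factor $\langle I',\ldots,y_{p_k}^{x^{b_k}}\rangle_r$ per variable power appearing in $M$. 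Here the constraint $\deg(f_{j_i})<a_i$ in the definition of $T_{j_1\ldots j_t}^{a_1\ldots a_t}$ is precisely what forces $b_k<a_m$ when $p_k=j_m$.

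Assembling: starting from $\langle I,y_{j_1}^{x^{a_1}},\ldots,y_{j_t}^{x^{a_t}}\rangle_r$ and applying the single-step identity once for each binomial adjoined in passing from $I^{[i-1]}$ to $I^{[i]}$ (there are finitely many per step because $N_{j_1\ldots j_t}^{a_1\ldots a_t}$ stabilises, and the total over all relevant steps is finite), one obtains
$$\langle I,y_{j_1}^{x^{a_1}},\ldots,y_{j_t}^{x^{a_t}}\rangle_r=\langle I_0,y_{j_1}^{x^{a_1}},\ldots,y_{j_t}^{x^{a_t}}\rangle_r\cap\bigcap_{k}\langle I,y_{j_1}^{x^{a_1}},\ldots,y_{j_t}^{x^{a_t}},y_{p_k}^{x^{b_k}}\rangle_r,$$
with each $p_k,b_k$ of the asserted form; note one must intersect back in the already-adjoined generators $y_{j_i}^{x^{a_i}}$ at every stage, which is harmless since they sit inside both sides. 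The quasi-normality hypothesis on $\langle I_0,y_{j_1}^{x^{a_1}},\ldots,y_{j_t}^{x^{a_t}}\rangle_r$ is used to guarantee that the saturation process terminates in a controlled way and that the leftover binomial ideal behaves as needed (in particular it plays the role that makes Lemma~\ref{lm5} applicable downstream). I expect the main obstacle to be the careful verification of finiteness and stabilisation of the $I^{[i]}$ together with the bookkeeping that keeps the adjoined monomials $y_{j_i}^{x^{a_i}}$ consistent across the iterated intersections — the algebra of each individual step is routine via Lemma~\ref{lm6}, but making sure the collection $\{(p_k,b_k)\}$ is genuinely finite and satisfies the stated degree constraint requires invoking quasi-normality at exactly the right place.
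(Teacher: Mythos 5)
Your overall strategy is the same as the paper's: present $I_0$ as the union of the tower $I^{[i]}$ via Lemma \ref{lm9}, peel off the saturation steps using the product formula of Lemma \ref{lm6}, and convert the monomials of $T_{j_1\ldots j_t}^{a_1\ldots a_t}$ into the extra factors $\langle I,\ldots,y_{p_k}^{x^{b_k}}\rangle_r$, whose exponents satisfy the stated constraint exactly because of the degree bounds in the definition of $T_{j_1\ldots j_t}^{a_1\ldots a_t}$. Two points need repair, however. First, your central displayed identity is stated backwards. From $Mg\in I'$ and Lemma \ref{lm6} one gets
$$\langle I',g\rangle_r\cap\langle I',M\rangle_r=\langle I',Mg\rangle_r=\langle I'\rangle_r,$$
that is, the \emph{smaller} ideal $\langle I'\rangle_r$ is the intersection of the two enlarged ones; the equation $\langle I',g\rangle_r=\langle I'\rangle_r\cap\langle I',M\rangle_r$ as you wrote it is false in general, since its right-hand side is contained in $\langle I'\rangle_r$ while the whole point of the saturation step is that $g\notin\langle I'\rangle_r$. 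This is the identity used in Lemma \ref{lm1}, which you cite, so the fix is only a matter of writing it in the correct direction.

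Second, and more substantively, your finiteness bookkeeping is not justified. You assert that each step $I^{[i]}=[I^{[i-1]}:T_{j_1\ldots j_t}^{a_1\ldots a_t}]$ adjoins only finitely many binomials and that the tower stabilises, but $I^{[i-1]}:T_{j_1\ldots j_t}^{a_1\ldots a_t}$ is a $\sigma$-ideal in a non-Noetherian ring and need not be finitely generated over $I^{[i-1]}$, nor need the tower stabilise at a finite level; ``because $N$ stabilises'' is not an argument, and quasi-normality by itself does not directly give stabilisation. The paper runs the finiteness argument in the opposite direction: quasi-normality together with Lemma \ref{lm5} yields finitely many $f_1,\ldots,f_s\in I_0$ with $\langle I_0,y_{j_1}^{x^{a_1}},\ldots,y_{j_t}^{x^{a_t}}\rangle_r=\langle f_1,\ldots,f_s,y_{j_1}^{x^{a_1}},\ldots,y_{j_t}^{x^{a_t}}\rangle_r$; these finitely many elements lie in a single $I^{[i]}$ by Lemma \ref{lm9}, and only \emph{they} are traced back down the tower, producing at each of the finitely many levels finitely many elements $g$ of the previous colon ideal and finitely many monomials $m\in T_{j_1\ldots j_t}^{a_1\ldots a_t}$ to be split up. So Lemma \ref{lm5} must be invoked at the very start to bound the whole computation, not ``downstream''. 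With the identity reversed and the finiteness argument reorganised in this way, your proposal becomes the paper's proof.
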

\begin{proof}
Since $\langle I_0,y_{j_1}^{x^{a_1}},\ldots,y_{j_t}^{x^{a_t}}\rangle_r$ is quasi-normal, by Lemma \ref{lm5}, it is finitely generated as a radical well-mixed $\D$-ideal. That is to say, there exist $f_1,\ldots,f_s\in I_0$ such that
$$\langle I_0,y_{j_1}^{x^{a_1}},\ldots,y_{j_t}^{x^{a_t}}\rangle_r=\langle f_1,\ldots,f_s,y_{j_1}^{x^{a_1}},\ldots,y_{j_t}^{x^{a_t}}\rangle_r.$$
By (\ref{eq1}), $I_0=\cup_{i=0}^{\infty}I^{[i]}$, so there exists $i\in\N$ such that $f_1,\ldots,f_s\in I^{[i]}$. By definition, there exist $g_{i1},\ldots,g_{il_i}\in I^{[i-1]}:T_{j_1\ldots j_t}^{a_1\ldots a_t}$ and $m_{i1},\ldots,m_{il_i}\in T_{j_1\ldots j_t}^{a_1\ldots a_t}$ such that $f_1,\ldots,f_s\in[g_{i1},\ldots,g_{il_i}]$ and $m_{i1}g_{i1},\ldots,m_{il_i}g_{il_i}\in I^{[i-1]}$. Again there exist $g_{i-11},\ldots,g_{i-1l_{i-1}}\in I^{[i-2]}:T_{j_1\ldots j_t}^{a_1\ldots a_t}$ and $m_{i-11},\ldots,m_{i-1l_{i-1}}\in T_{j_1\ldots j_t}^{a_1\ldots a_t}$ such that $m_{i1}g_{i1},\ldots,m_{il_i}g_{il_i}\in[g_{i-11},\ldots,g_{i-1l_{i-1}}]$ and $m_{i-11}g_{i-11},\ldots,m_{i-1l_{i-1}}g_{i-1l_{i-1}}\in I^{[i-2]}$. Iterating this process, we eventually have there exist $g_{11},\ldots,g_{1l_{1}}\in I:T_{j_1\ldots j_t}^{a_1\ldots a_t}$ and $m_{11},\ldots,m_{1l_{1}}\in T_{j_1\ldots j_t}^{a_1\ldots a_t}$ such that $m_{21}g_{21},\ldots,m_{2l_2}g_{2l_2}\in[g_{11},\ldots,g_{1l_{1}}]$ and $m_{11}g_{11},\ldots,m_{1l_{1}}g_{1l_{1}}\in I$.
So by Lemma \ref{lm6}, we obtain
\begin{align*}
\langle I,y_{j_1}^{x^{a_1}},\ldots,y_{j_t}^{x^{a_t}}\rangle_r&=\langle I,g_{11},\ldots,g_{1l_{1}},y_{j_1}^{x^{a_1}},\ldots,y_{j_t}^{x^{a_t}}\rangle_r\cap\langle I,m_{11}\cdots m_{1l_{1}},y_{j_1}^{x^{a_1}},\ldots,y_{j_t}^{x^{a_t}}\rangle_r\\
&=\langle I,g_{21},\ldots,g_{2l_{2}},g_{11},\ldots,g_{1l_{1}},y_{j_1}^{x^{a_1}},\ldots,y_{j_t}^{x^{a_t}}\rangle_r\\
&{\hspace{1em}}\cap\langle I,m_{21}\cdots m_{2l_{2}}m_{11}\cdots m_{1l_{1}},y_{j_1}^{x^{a_1}},\ldots,y_{j_t}^{x^{a_t}}\rangle_r\\
&=\cdots\\
&=\langle I,g_{i1},\ldots,g_{il_i},\ldots,g_{11},\ldots,g_{1l_{1}},y_{j_1}^{x^{a_1}},\ldots,y_{j_t}^{x^{a_t}}\rangle_r\\
&{\hspace{1em}}\cap\langle I,m_{i1}\cdots m_{il_{i}}\cdots m_{11}\cdots m_{1l_{1}},y_{j_1}^{x^{a_1}},\ldots,y_{j_t}^{x^{a_t}}\rangle_r\\
&=\langle I_0,y_{j_1}^{x^{a_1}},\ldots,y_{j_t}^{x^{a_t}}\rangle_r\cap\bigcap_{1\le k\le l}\langle I,y_{j_1}^{x^{a_1}},\ldots,y_{j_t}^{x^{a_t}},y_{p_k}^{x^{b_k}}\rangle_r,
\end{align*}
where either $p_k\notin\{j_1,\ldots,j_t\}$, or $p_k=j_m$ and $b_k<a_m$ for $1\le k\le l$.
\end{proof}

From the proof of Lemma \ref{lm3}, we obtain the following lemma which will be used later.
\begin{lemma}\label{lm11}
Suppose $\{j_1,\ldots,j_t\}\subseteq\{1,\ldots,n\}$, $(a_1,\ldots,a_t)\in\N^t$ and $I\subseteq K\{y_1,\ldots,y_n\}$ is a pure binomial $\D$-ideal. Let $h\in N_{j_1\ldots j_t}^{a_1\ldots a_t}(I)$.
Then
$$\langle I,y_{j_1}^{x^{a_1}},\ldots,y_{j_t}^{x^{a_t}}\rangle_r=\langle I', y_{j_1}^{x^{a_1}},\ldots,y_{j_t}^{x^{a_t}}\rangle_r\cap\bigcap_{1\le k\le l}\langle I,y_{j_1}^{x^{a_1}},\ldots,y_{j_t}^{x^{a_t}},y_{p_k}^{x^{b_k}}\rangle_r,$$
where $I'\supseteq [I,h]$ and either $p_k\notin\{j_1,\ldots,j_t\}$, or $p_k=j_m$ and $b_k<a_m$ for $1\le k\le l$.
\end{lemma}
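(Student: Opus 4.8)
The plan is to re-run the argument in the proof of Lemma~\ref{lm3}, but keep track of \emph{which} element of $N_{j_1\ldots j_t}^{a_1\ldots a_t}(I)$ we actually need to put into the first component. In Lemma~\ref{lm3} we invoked Lemma~\ref{lm5} to get a finite generating set $f_1,\ldots,f_s$ of $\langle I_0,y_{j_1}^{x^{a_1}},\ldots,y_{j_t}^{x^{a_t}}\rangle_r$ modulo the monomials, and then peeled off the saturation filtration $I^{[i]}$ using Lemma~\ref{lm6}. The only property of $\{f_1,\ldots,f_s\}$ that was used is that they lie in $I_0=\cup_i I^{[i]}$, so that each sits in some $I^{[i]}$ and can be chased down the filtration. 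For the present lemma I would simply take the single element $h\in N_{j_1\ldots j_t}^{a_1\ldots a_t}(I)$ in place of the list $f_1,\ldots,f_s$.

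Concretely: since $h\in\cup_{i=0}^{\infty}I^{[i]}$ by Lemma~\ref{lm9}, there is an $i\in\N$ with $h\in I^{[i]}$. Unwinding the recursive definition $I^{[i]}=[I^{[i-1]}:T_{j_1\ldots j_t}^{a_1\ldots a_t}]$ exactly as in Lemma~\ref{lm3}, there exist $g_{i1},\ldots,g_{il_i}\in I^{[i-1]}:T_{j_1\ldots j_t}^{a_1\ldots a_t}$ and monomials $m_{i1},\ldots,m_{il_i}\in T_{j_1\ldots j_t}^{a_1\ldots a_t}$ with $h\in[g_{i1},\ldots,g_{il_i}]$ and each $m_{ik}g_{ik}\in I^{[i-1]}$; iterating down to level $0$ produces $g_{11},\ldots,g_{1l_1}\in I:T_{j_1\ldots j_t}^{a_1\ldots a_t}$ with each $m_{1k}g_{1k}\in I$. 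Now apply Lemma~\ref{lm6} repeatedly, splitting off one $g_{\alpha k}$ at a time:
\[
\langle I,y_{j_1}^{x^{a_1}},\ldots,y_{j_t}^{x^{a_t}}\rangle_r
=\langle I,g_{11},\ldots,g_{1l_1},y_{j_1}^{x^{a_1}},\ldots,y_{j_t}^{x^{a_t}}\rangle_r
\cap\langle I,m_{11}\cdots m_{1l_1},y_{j_1}^{x^{a_1}},\ldots,y_{j_t}^{x^{a_t}}\rangle_r,
\]
and continue upward through all the levels, at each stage using $m_{\alpha k}g_{\alpha k}\in[g_{(\alpha-1)1},\ldots]$ (or $\in I$ at the bottom) together with Lemma~\ref{lm6} to merge the monomial factors into a single monomial in $T_{j_1\ldots j_t}^{a_1\ldots a_t}$. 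After finitely many steps the first component becomes $\langle I,g_{i1},\ldots,g_{il_i},\ldots,g_{11},\ldots,g_{1l_1},y_{j_1}^{x^{a_1}},\ldots,y_{j_t}^{x^{a_t}}\rangle_r$; call the $\D$-ideal $[I,g_{i1},\ldots,g_{il_i},\ldots,g_{11},\ldots,g_{1l_1}]$ by $I'$, and note $I'\supseteq[I,h]$ since $h\in[g_{i1},\ldots,g_{il_i}]\subseteq I'$. Each merged monomial factor, being in $T_{j_1\ldots j_t}^{a_1\ldots a_t}$, is a product of powers $y_p^{f_p}$ with $\deg(f_{j_m})<a_m$; by Lemma~\ref{lm6} again, intersecting in such a monomial factor is the same as intersecting in the ideals $\langle I,y_{j_1}^{x^{a_1}},\ldots,y_{j_t}^{x^{a_t}},y_{p_k}^{x^{b_k}}\rangle_r$ ranging over the variables $y_{p_k}$ occurring, where either $p_k\notin\{j_1,\ldots,j_t\}$ or $p_k=j_m$ with $b_k<a_m$. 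This yields the displayed identity.

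There is essentially no new obstacle here — the lemma is explicitly advertised as following \emph{from the proof} of Lemma~\ref{lm3} — so the only thing to be careful about is bookkeeping: making sure the monomials $m_{\alpha k}$ accumulated at each level genuinely stay inside $T_{j_1\ldots j_t}^{a_1\ldots a_t}$ (this is where $T_{j_1\ldots j_t}^{a_1\ldots a_t}$ being multiplicatively closed is used), and verifying that when a factor $y_{j_m}^{c}$ with $\deg(c)<a_m$ is split off via Lemma~\ref{lm6} it indeed contributes a term of the stated form rather than something that collapses the intersection. Since Lemma~\ref{lm6} gives $\langle A\rangle_r\cap\langle B\rangle_r=\langle AB\rangle_r$ for arbitrary subsets $A,B$, the split-off is valid verbatim and no extra hypothesis is needed.
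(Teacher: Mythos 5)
Your proposal is correct and is essentially the paper's intended argument: the paper gives no separate proof of Lemma~\ref{lm11}, deriving it ``from the proof of Lemma~\ref{lm3}'' exactly as you do, by chasing the single element $h$ down the filtration $I^{[i]}$ in place of the finite generating set $f_1,\ldots,f_s$ and splitting with Lemma~\ref{lm6} at each level. Your added bookkeeping remarks (multiplicative closedness of $T_{j_1\ldots j_t}^{a_1\ldots a_t}$ and the reduction of the accumulated monomial to factors $y_{p_k}^{x^{b_k}}$ of the stated form) match the steps already used in Lemmas~\ref{lm1} and~\ref{lm3}.
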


\begin{lemma}\label{lm7}
Suppose $\{j_1,\ldots,j_t\}\subseteq\{1,\ldots,n\}$, $(a_1,\ldots,a_t)\in\N^t$ and $I\subseteq K\{y_1,\ldots,y_n\}$ is a pure binomial $\D$-ideal. Assume that there exists a binomial $\Y^{\bf}-\Y^{\bg}\in I$ such that $\Y^{\bf}\in[y_{j_1}^{x^{a_1}},\ldots,y_{j_t}^{x^{a_t}}]$ and $\Y^{\bg}\notin[y_{j_1}^{x^{a_1}},\ldots,y_{j_t}^{x^{a_t}}]$. Then
$$\langle I,y_{j_1}^{x^{a_1}},\ldots,y_{j_t}^{x^{a_t}}\rangle_r=\bigcap_{1\le k\le l}\langle I,y_{j_1}^{x^{a_1}},\ldots,y_{j_t}^{x^{a_t}},y_{p_k}^{x^{b_k}}\rangle_r,$$
where either $p_k\notin\{j_1,\ldots,j_t\}$, or $p_k=j_m$ and $b_k<a_m$ for $1\le k\le l$.
\end{lemma}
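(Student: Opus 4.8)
The plan is to deduce this from Lemma~\ref{lm6} and the fact that $J:=\langle I,y_{j_1}^{x^{a_1}},\ldots,y_{j_t}^{x^{a_t}}\rangle_r$ is radical. The first step is to notice that the monomial $\Y^{\bg}$ itself lies in $J$: since $\Y^{\bf}-\Y^{\bg}\in I\subseteq J$ and $\Y^{\bf}\in[y_{j_1}^{x^{a_1}},\ldots,y_{j_t}^{x^{a_t}}]\subseteq J$, subtracting gives $\Y^{\bg}\in J$. This is the only moment at which the hypothesis $\Y^{\bf}\in[y_{j_1}^{x^{a_1}},\ldots,y_{j_t}^{x^{a_t}}]$ is used.

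Next I would replace $\Y^{\bg}$ by its squarefree part. Write $\bg=(g_1,\ldots,g_n)$ with $g_i=\sum_d c_{i,d}x^d\in\N[x]$, set $S=\{(i,d):c_{i,d}\ge 1\}$, and let $M=\prod_{(i,d)\in S}y_i^{x^d}$. Since $\Y^{\bg}$ divides $M^C$ with $C=\max_{(i,d)\in S}c_{i,d}$, we get $M^C\in(\Y^{\bg})\subseteq J$, so $M\in J$ because $J$ is radical. Enumerate the distinct factors of $M$ as $y_{p_1}^{x^{b_1}},\ldots,y_{p_l}^{x^{b_l}}$, one per element of $S$. The degree restriction on the $(p_k,b_k)$ demanded in the statement is then forced by the other hypothesis $\Y^{\bg}\notin[y_{j_1}^{x^{a_1}},\ldots,y_{j_t}^{x^{a_t}}]$: a monomial lies in that $\D$-ideal exactly when $\deg(g_{j_m})\ge a_m$ for some $m$, so here $\deg(g_{j_m})<a_m$ for every $m$; hence whenever $p_k=j_m$ one has $b_k\in\supp(g_{j_m})$ and therefore $b_k<a_m$.

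Finally I would collapse the intersection using the ``set'' form of Lemma~\ref{lm6}. For $u,v\in K\{\Y\}$ and the sets $F=J\cup\{u\}$, $G=J\cup\{v\}$ one checks $FG\subseteq J\cup\{uv\}$, while $j^2\in FG$ for every $j\in J$ and $uv\in FG$; since $J$ is radical this yields $\langle J,u\rangle_r\cap\langle J,v\rangle_r=\langle FG\rangle_r=\langle J,uv\rangle_r$. Iterating over the $l$ factors of $M$,
\[\bigcap_{1\le k\le l}\langle J,y_{p_k}^{x^{b_k}}\rangle_r=\langle J,\,y_{p_1}^{x^{b_1}}\cdots y_{p_l}^{x^{b_l}}\rangle_r=\langle J,M\rangle_r=J,\]
the last equality because $M\in J$. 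As $\langle J,y_{p_k}^{x^{b_k}}\rangle_r=\langle I,y_{j_1}^{x^{a_1}},\ldots,y_{j_t}^{x^{a_t}},y_{p_k}^{x^{b_k}}\rangle_r$, this is precisely the asserted identity. In the degenerate case $\bg=\mathbf{0}$ one has $1\in J$, so $J=[1]$ and the identity holds with $l=0$, the empty intersection being all of $K\{\Y\}$.

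I do not anticipate a genuine obstacle: once $\Y^{\bg}\in J$ is observed, the statement is bookkeeping built on Lemma~\ref{lm6}. The only point needing care is verifying that the new generators $y_{p_k}^{x^{b_k}}$ extracted from $M$ meet the constraint ``$p_k\notin\{j_1,\ldots,j_t\}$, or $p_k=j_m$ with $b_k<a_m$'', and that is exactly where $\Y^{\bg}\notin[y_{j_1}^{x^{a_1}},\ldots,y_{j_t}^{x^{a_t}}]$ enters.
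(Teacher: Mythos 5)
Your proposal is correct and follows essentially the same route as the paper's own (much terser) proof: observe $\Y^{\bg}\in\langle I,y_{j_1}^{x^{a_1}},\ldots,y_{j_t}^{x^{a_t}}\rangle_r$, pass to its squarefree part using radicality to get a product $y_{p_1}^{x^{b_1}}\cdots y_{p_l}^{x^{b_l}}$ in the ideal with the required degree bounds, and collapse the intersection via Lemma~\ref{lm6}. You have merely made explicit the steps the paper compresses into ``by the properties of radical well-mixed $\D$-ideals.''
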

\begin{proof}
Since there exists a binomial $\Y^{\bf}-\Y^{\bg}\in I$ such that $\Y^{\bf}\in[y_{j_1}^{x^{a_1}},\ldots,y_{j_t}^{x^{a_t}}]$ and $\Y^{\bg}\notin[y_{j_1}^{x^{a_1}},\ldots,y_{j_t}^{x^{a_t}}]$, then $\Y^{\bg}\in\langle I,y_{j_1}^{x^{a_1}},\ldots,y_{j_t}^{x^{a_t}}\rangle_r$. Therefore, by the properties of radical well-mixed $\D$-ideals, there exist $\{p_1,\ldots,p_l\}\subseteq\{1,\ldots,n\}$ and $(b_1,\ldots,b_l)\in\N^l$ satisfying either $p_k\notin\{j_1,\ldots,j_t\}$, or $p_k=j_m$ and $b_k<a_m$, for $1\le k\le l$ such that $y_{p_1}^{x^{b_1}}\cdots y_{p_l}^{x^{b_l}}\in\langle I,y_{j_1}^{x^{a_1}},\ldots,y_{j_t}^{x^{a_t}}\rangle_r$. Hence,
$$\langle I,y_{j_1}^{x^{a_1}},\ldots,y_{j_t}^{x^{a_t}}\rangle_r=\bigcap_{1\le k\le l}\langle I,y_{j_1}^{x^{a_1}},\ldots,y_{j_t}^{x^{a_t}},y_{p_k}^{x^{b_k}}\rangle_r.$$
\end{proof}


\begin{lemma}\label{lm4}
Let $i\in\{1,\ldots,n\}$ and $a\in\N$. Suppose $I\subseteq K\{y_1,\ldots,y_n\}$ is a pure binomial $\D$-ideal. Then
$$\langle I,y_i^{x^a}\rangle_r=\bigcap_{(j_1,\ldots,j_t),(b_{j_1},\ldots,b_{j_t})}\langle I_{j_1,\ldots,j_t}^{b_{j_1}\ldots b_{j_t}},y_{j_1}^{x^{b_{j_1}}},\ldots,y_{j_t}^{x^{b_{j_t}}}\rangle_r$$
is a finite intersection, $i\in\{j_1,\ldots,j_t\}$ and for each member in the intersection, either $I_{j_1,\ldots,j_t}^{b_{j_1}\ldots b_{j_t}}\subseteq[y_{j_1}^{x^{b_{j_1}}},\ldots,y_{j_t}^{x^{b_{j_t}}}]$, or $\langle I_{j_1,\ldots,j_t}^{b_{j_1}\ldots b_{j_t}},y_{j_1}^{x^{b_{j_1}}},\ldots,y_{j_t}^{x^{b_{j_t}}}\rangle_r$ is quasi-normal.
\end{lemma}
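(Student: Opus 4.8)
\textbf{Proof proposal for Lemma \ref{lm4}.}

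The plan is to reduce $\langle I, y_i^{x^a}\rangle_r$, via repeated application of Lemmas \ref{lm3}, \ref{lm11} and \ref{lm7}, to a finite intersection of radical well-mixed $\D$-ideals each of which is of the desired shape, and to control termination by a well-chosen induction. First I would set up the bookkeeping: a typical member appearing during the recursion has the form $\langle J, y_{j_1}^{x^{b_{j_1}}},\ldots,y_{j_t}^{x^{b_{j_t}}}\rangle_r$ with $J$ a pure binomial $\D$-ideal containing $I$ (after adjoining finitely many binomials from suitable saturations) and $i\in\{j_1,\ldots,j_t\}$. To each such member I attach a ``complexity'' consisting of the tuple of exponents $(b_{j_1},\ldots,b_{j_t})$ together with the finiteness data coming from $N_{j_1\ldots j_t}^{b_{j_1}\ldots b_{j_t}}(J)$. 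The point of Lemma \ref{lm3} (and its refinement Lemma \ref{lm11}) is that whenever $\langle N_{j_1\ldots j_t}^{b_{j_1}\ldots b_{j_t}}(J), y_{j_1}^{x^{b_{j_1}}},\ldots\rangle_r$ is quasi-normal, one rewrites the member as the intersection of a quasi-normal member (which is terminal, by the second alternative in the statement) with finitely many members in which a \emph{new} generator $y_{p_k}^{x^{b_k}}$ is adjoined, where either $p_k$ is outside the current index set or $p_k=j_m$ with a strictly smaller exponent $b_k<b_m$. Thus each rewriting step either increases the size of the index set $\{j_1,\ldots,j_t\}$ (bounded by $n$) or strictly decreases one of the exponents already present, so this branch of the recursion can only be applied finitely often before we reach a terminal member.

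Next I would handle the case where the quasi-normality hypothesis of Lemma \ref{lm3} fails. Quasi-normality of $\langle I_0, y_{j_1}^{x^{b_{j_1}}},\ldots\rangle_r$ (with $I_0 = N_{j_1\ldots j_t}^{\cdots}(J)$) can fail for two reasons: either $I_0$ is already saturated but there is a binomial $\Y^{\bf}-\Y^{\bg}\in I_0$ with $\Y^{\bf}\in[y_{j_1}^{x^{b_{j_1}}},\ldots,y_{j_t}^{x^{b_{j_t}}}]$ and $\Y^{\bg}\notin[y_{j_1}^{x^{b_{j_1}}},\ldots,y_{j_t}^{x^{b_{j_t}}}]$, in which case Lemma \ref{lm7} applies and eliminates $I_0$ entirely, rewriting the member as an intersection of members each with a new generator $y_{p_k}^{x^{b_k}}$ of the controlled type (so the same termination measure drops); or the saturation process itself produced a genuinely new binomial $h\in N_{j_1\ldots j_t}^{\cdots}(J)\setminus J$, in which case Lemma \ref{lm11} replaces $J$ by $J'\supseteq[J,h]$ in the ``main'' member and again splits off controlled members. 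In the latter situation the index set and exponents do not change, so I must argue separately that one cannot enrich $J$ to $J'$ infinitely often: here I would invoke that $N_{j_1\ldots j_t}^{\cdots}(J)$ is itself a fixed pure binomial $\D$-ideal (it is the $T$-saturated closure, $= \cup_i J^{[i]}$ by Lemma \ref{lm9}), and that by Lemma \ref{lm5} the quasi-normal ideal generated by the ``interior'' binomials $\{\Y^{\bh_+}-\Y^{\bh_-}\in N(\cdots)\mid \Y^{\bh_\pm}\in T_{j_1\ldots j_t}^{b_{j_1}\ldots b_{j_t}}\}$ is finitely generated as a radical well-mixed $\D$-ideal; so after adjoining finitely many such binomials once and for all, the main branch reaches a member whose saturation is quasi-normal, and the process stabilizes.

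The last step is to assemble these reductions. Starting from $\langle I, y_i^{x^a}\rangle_r$ I apply the trichotomy (Lemma \ref{lm3}/\ref{lm11}/\ref{lm7}, according to which case obtains) repeatedly; each application replaces one member of the current finite intersection by a finite intersection of strictly ``simpler'' members (with respect to the well-founded measure: lexicographically, first the increase of $|\{j_1,\ldots,j_t\}|$ toward $n$, then the strict decrease of some exponent, then the finitely-many-times enrichment $J\rightsquigarrow J'$), unless that member is already terminal, i.e. of one of the two allowed forms $I_{j_1\ldots j_t}^{\cdots}\subseteq [y_{j_1}^{x^{b_{j_1}}},\ldots,y_{j_t}^{x^{b_{j_t}}}]$ or quasi-normal. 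By König's lemma (the branching is finite and every branch terminates) the whole process halts, leaving a finite intersection of terminal members, and $i$ stays in the index set throughout because we never remove generators, only add them. The main obstacle I expect is precisely the bookkeeping in the middle paragraph: making the termination measure genuinely well-founded \emph{across all three cases simultaneously}, in particular ruling out an infinite run that keeps applying Lemma \ref{lm11} (changing only $J$) — this requires the Noetherianity of the relevant lattice/saturation data and a careful argument that the ``interior'' part of the saturated closure is reached in finitely many enrichment steps; once that is nailed down, the rest is routine.
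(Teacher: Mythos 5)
Your overall strategy --- reduce $\langle I,y_i^{x^a}\rangle_r$ via Lemmas \ref{lm7}, \ref{lm11} and \ref{lm3}, and control termination by the measure ``the index set $\{j_1,\ldots,j_t\}$ grows or some exponent $b_{j_m}$ strictly drops'' --- is exactly the paper's strategy, and your first and last paragraphs match the actual proof. But the step you yourself flag as the main obstacle, namely ruling out an infinite run of applications of Lemma \ref{lm11} that enrich only $J$ while leaving $(j_1,\ldots,j_t)$ and the exponents fixed, is a genuine gap as you have left it, and your proposed fix (Noetherianity of the saturation data, adjoining the ``interior'' binomials of $N_{j_1\ldots j_t}^{\cdots}(J)$ once and for all via Lemma \ref{lm5}) is not the right mechanism: quasi-normality fails because of \emph{bad} binomials $\Y^{\bf}-\Y^{\bg}\in N_{j_1\ldots j_t}^{\cdots}(J)$ with $\Y^{\bf}\in[y_{j_1}^{x^{b_{j_1}}},\ldots,y_{j_t}^{x^{b_{j_t}}}]$ but $\Y^{\bg}\notin[y_{j_1}^{x^{b_{j_1}}},\ldots,y_{j_t}^{x^{b_{j_t}}}]$, and adjoining binomials whose two monomials both lie in $T_{j_1\ldots j_t}^{b_{j_1}\ldots b_{j_t}}$ does nothing to remove them.

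The missing idea is that in Lemma \ref{lm11} one should take $h$ to be precisely such a bad binomial. Then the ``main'' member $\langle I',y_{j_1}^{x^{b_{j_1}}},\ldots,y_{j_t}^{x^{b_{j_t}}}\rangle_r$ it produces satisfies $\Y^{\bf}-\Y^{\bg}\in I'$ with $\Y^{\bf}$ in the monomial ideal and $\Y^{\bg}$ not, so Lemma \ref{lm7} applies to it \emph{immediately} and rewrites it as an intersection of members each carrying a new generator $y_{s_k}^{x^{e_k}}$ with either $s_k\notin\{j_1,\ldots,j_t\}$ or $s_k=j_m$ and $e_k<b_{j_m}$. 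Hence after one round of Lemma \ref{lm11} followed by Lemma \ref{lm7}, \emph{every} surviving member has strictly smaller complexity in your well-founded order; the problematic branch with unchanged index data never persists, no separate argument about finitely many enrichment steps is needed, and termination follows for exactly the reason given in your first paragraph. Once a member's saturated closure contains no bad binomial it is quasi-normal (it is saturated by construction), and Lemma \ref{lm3} finishes that branch.
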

\begin{proof}

Using Lemma \ref{lm7} repeatedly if there exists a binomial $\Y^{\bf}-\Y^{\bg}\in I$ such that $\Y^{\bf}\in[y_i^{x^a}]$ and $\Y^{\bg}\notin[y_i^{x^a}]$, assume that we obtain a decomposition as follows:
\begin{equation}\label{eq2}
\langle I,y_i^{x^a}\rangle_r=\bigcap_{(j_1\ldots j_t),(c_{j_1}\ldots c_{j_t})}\langle I_{j_1,\ldots,j_t}^{c_{j_1}\ldots c_{j_t}},y_{j_1}^{x^{c_{j_1}}},\ldots,y_{j_t}^{x^{c_{j_t}}}\rangle_r.
\end{equation}

For each $I_{j_1,\ldots,j_t}^{c_{j_1}\ldots c_{j_t}}$, if $I_{j_1,\ldots,j_t}^{c_{j_1}\ldots c_{j_t}}\subseteq[y_{j_1}^{x^{c_{j_1}}},\ldots,y_{j_t}^{x^{c_{j_t}}}]$, then we have nothing to do. Otherwise, let $I_0=N_{j_1\ldots j_t}^{c_{j_1}\ldots c_{j_t}}(I_{j_1,\ldots,j_t}^{c_{j_1}\ldots c_{j_t}})$. If there exists a binomial $\Y^{\bf}-\Y^{\bg}\in I_0$ such that $\Y^{\bf}\in[y_{j_1}^{x^{c_{j_1}}},\ldots,y_{j_t}^{x^{c_{j_t}}}]$ and $\Y^{\bg}\notin[y_{j_1}^{x^{c_{j_1}}},\ldots,y_{j_t}^{x^{c_{j_t}}}]$. Then by Lemma \ref{lm11},
$$\langle I_{j_1,\ldots,j_t}^{c_{j_1}\ldots c_{j_t}},y_{j_1}^{x^{c_{j_1}}},\ldots,y_{j_t}^{x^{c_{j_t}}}\rangle_r=\langle I', y_{j_1}^{x^{c_{j_1}}},\ldots,y_{j_t}^{x^{c_{j_t}}}\rangle_r\cap\bigcap_{1\le k\le l}\langle I_{j_1,\ldots,j_t}^{c_{j_1}\ldots c_{j_t}},y_{j_1}^{x^{c_{j_1}}},\ldots,y_{j_t}^{x^{c_{j_t}}},y_{p_k}^{x^{d_k}}\rangle_r,$$
where $I'\supseteq [I_{j_1,\ldots,j_t}^{c_{j_1}\ldots c_{j_t}},\Y^{\bf}-\Y^{\bg}]$ and either $p_k\notin\{j_1,\ldots,j_t\}$, or $p_k=j_m$ and $d_k<c_{j_m}$ for $1\le k\le l$. Therefore, by Lemma \ref{lm7}, we have
$$\langle I', y_{j_1}^{x^{c_{j_1}}},\ldots,y_{j_t}^{x^{c_{j_t}}}\rangle_r=\bigcap_{1\le k\le l'}\langle I', y_{j_1}^{x^{c_{j_1}}},\ldots,y_{j_t}^{x^{c_{j_t}}},y_{s_k}^{x^{e_k}}\rangle_r,$$
where either $s_k\notin\{j_1,\ldots,j_t\}$, or $s_k=j_m$ and $e_k<c_{j_m}$ for $1\le k\le l'$. Thus
\begin{align}\label{eq3}
\langle I_{j_1,\ldots,j_t}^{c_{j_1}\ldots c_{j_t}},y_{j_1}^{x^{c_{j_1}}},\ldots,y_{j_t}^{x^{c_{j_t}}}\rangle_r=&\bigcap_{1\le k\le l'}\langle I', y_{j_1}^{x^{c_{j_1}}},\ldots,y_{j_t}^{x^{c_{j_t}}},y_{s_k}^{x^{e_k}}\rangle_r\cap\\
&\bigcap_{1\le k\le l}\langle I_{j_1,\ldots,j_t}^{c_{j_1}\ldots c_{j_t}},y_{j_1}^{x^{c_{j_1}}},\ldots,y_{j_t}^{x^{c_{j_t}}},y_{p_k}^{x^{d_k}}\rangle_r.\notag
\end{align}
For each member in the intersection (\ref{eq3}), repeat the above process. Since at each step, either the number of elements of $\{y_{j_1},\ldots,y_{j_t}\}$ strictly increase, or the vector $(c_{j_1},\ldots,c_{j_t})$ strictly decrease, then in finite steps we must obtain either $I_{j_1,\ldots,j_t}^{c_{j_1}\ldots c_{j_t}}\subseteq[y_{j_1}^{x^{c_{j_1}}},\ldots,y_{j_t}^{x^{c_{j_t}}}]$, or for any binomial $\Y^{\bf}-\Y^{\bg}\in I_0$, if $\Y^{\bf}\in[y_{j_1}^{x^{c_{j_1}}},\ldots,y_{j_t}^{x^{c_{j_t}}}]$, then $\Y^{\bg}\in[y_{j_1}^{x^{c_{j_1}}},\ldots,y_{j_t}^{x^{c_{j_t}}}]$. In the latter case, by Lemma \ref{lm3},
$$\langle I_{j_1,\ldots,j_t}^{c_{j_1}\ldots c_{j_t}},y_{j_1}^{x^{c_{j_1}}},\ldots,y_{j_t}^{x^{c_{j_t}}}\rangle_r=\langle I_0, y_{j_1}^{x^{c_{j_1}}},\ldots,y_{j_t}^{x^{c_{j_t}}}\rangle_r\cap\bigcap_{1\le k\le l''}\langle I_{j_1,\ldots,j_t}^{c_{j_1}\ldots c_{j_t}},y_{j_1}^{x^{c_{j_1}}},\ldots,y_{j_t}^{x^{c_{j_t}}},y_{t_k}^{x^{b_k}}\rangle_r,$$
where either $t_k\notin\{j_1,\ldots,j_t\}$, or $t_k=j_m$ and $b_k<c_{j_m}$ for $1\le k\le l''$.

Apply the same procedure to the rest of the members in the intersection, and in finite steps we obtain the desired decomposition.
\end{proof}

Now we can prove the main theorem of this paper.
\begin{theorem}\label{thm3}
Suppose $I\subseteq K\{y_1,\ldots,y_n\}$ is a pure binomial $\D$-ideal. Then $\langle I\rangle_r$ is finitely generated as a radical well-mixed $\D$-ideal.
\end{theorem}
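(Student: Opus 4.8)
The plan is to reduce the general pure binomial case to the two special cases already handled: the lattice ideal case (Theorem~\ref{thm1}, via Corollary~\ref{thm2}) and the quasi-normal case (Lemma~\ref{lm5}). The starting point is Lemma~\ref{lm1}, which gives
$$\langle I\rangle_r=\langle I\rangle_r:\mb\cap\langle I,y_{p_1}^{x^{a_1}}\rangle_r\cap\cdots\cap\langle I,y_{p_l}^{x^{a_l}}\rangle_r.$$
The first factor $\langle I\rangle_r:\mb$ is finitely generated as a radical well-mixed $\D$-ideal by Lemma~\ref{lm8}. Since a finite intersection of finitely generated radical well-mixed $\D$-ideals is finitely generated (each such ideal has a finite generating set $F_i$, and by Lemma~\ref{lm6} the intersection $\bigcap_i\langle F_i\rangle_r=\langle F_1F_2\cdots F_m\rangle_r$ is generated by the finite set of all products of one element from each $F_i$), it suffices to show that each factor $\langle I,y_{p_k}^{x^{a_k}}\rangle_r$ is finitely generated as a radical well-mixed $\D$-ideal.

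For a single factor $\langle I,y_i^{x^a}\rangle_r$, I would invoke Lemma~\ref{lm4}, which writes it as a finite intersection
$$\langle I,y_i^{x^a}\rangle_r=\bigcap_{(j_1,\ldots,j_t),(b_{j_1},\ldots,b_{j_t})}\langle I_{j_1,\ldots,j_t}^{b_{j_1}\ldots b_{j_t}},y_{j_1}^{x^{b_{j_1}}},\ldots,y_{j_t}^{x^{b_{j_t}}}\rangle_r,$$
where each member is either of the form $\langle I_{j_1,\ldots,j_t}^{b_{j_1}\ldots b_{j_t}},y_{j_1}^{x^{b_{j_1}}},\ldots,y_{j_t}^{x^{b_{j_t}}}\rangle_r$ with $I_{j_1,\ldots,j_t}^{b_{j_1}\ldots b_{j_t}}\subseteq[y_{j_1}^{x^{b_{j_1}}},\ldots,y_{j_t}^{x^{b_{j_t}}}]$ — in which case the ideal equals $\langle y_{j_1}^{x^{b_{j_1}}},\ldots,y_{j_t}^{x^{b_{j_t}}}\rangle_r$, which is trivially finitely generated — or quasi-normal, in which case it is finitely generated as a radical well-mixed $\D$-ideal by Lemma~\ref{lm5}. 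Since each piece in the intersection is finitely generated, and finite intersections preserve finite generation as above, each $\langle I,y_i^{x^a}\rangle_r$ is finitely generated, and hence so is $\langle I\rangle_r$.

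The one point requiring a little care is the well-definedness of the whole recursion: Lemma~\ref{lm4} (and the lemmas it rests on, Lemma~\ref{lm3}, Lemma~\ref{lm7}, Lemma~\ref{lm11}) is applied to ideals of the form $I_{j_1,\ldots,j_t}^{b_{j_1}\ldots b_{j_t}}$ together with a prescribed set of ``killed'' variables $y_{j_k}^{x^{b_{j_k}}}$, and the termination of that recursion is guaranteed there by the fact that at each step either the number of killed variables strictly increases or the exponent vector strictly decreases. So the real work has already been done in Lemma~\ref{lm4} and Lemma~\ref{lm5}; the proof of Theorem~\ref{thm3} itself is just the assembly: apply Lemma~\ref{lm1}, handle $\langle I\rangle_r:\mb$ by Lemma~\ref{lm8}, handle each $\langle I,y_i^{x^a}\rangle_r$ by Lemma~\ref{lm4} plus Lemma~\ref{lm5}, and combine using the product-of-generators description of intersections from Lemma~\ref{lm6}. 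The main (already-resolved) obstacle is the combinatorial bookkeeping in Lemma~\ref{lm4}; here it is only invoked as a black box.
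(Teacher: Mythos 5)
Your proposal is correct and follows essentially the same route as the paper's own proof: Lemma~\ref{lm1} to split off $\langle I\rangle_r:\mb$ (handled by Lemma~\ref{lm8}), Lemma~\ref{lm4} to decompose each $\langle I,y_{p_k}^{x^{a_k}}\rangle_r$ into quasi-normal pieces handled by Lemma~\ref{lm5}, and Lemma~\ref{lm6} to reassemble the finite intersections. Your explicit remark that the case $I_{j_1,\ldots,j_t}^{b_{j_1}\ldots b_{j_t}}\subseteq[y_{j_1}^{x^{b_{j_1}}},\ldots,y_{j_t}^{x^{b_{j_t}}}]$ yields a trivially finitely generated ideal is a small point of care the paper leaves implicit, but it is not a different argument.
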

\begin{proof}
By Lemma \ref{lm1}, we have
\begin{equation}\label{eq8}
\langle I\rangle_r=\langle I\rangle_r:\mb\cap\langle I,y_{p_1}^{x^{a_1}}\rangle_r\cap\cdots\cap\langle I,y_{p_l}^{x^{a_l}}\rangle_r
\end{equation}
for some $\{p_1,\ldots,p_l\}\subseteq\{1,\ldots,n\}$ and some $\{a_1,\ldots,a_l\}\in\N^l$.
By Lemma \ref{lm4},
\begin{align}\label{eq7}
\langle I,y_{p_k}^{x^{a_k}}\rangle_r=\bigcap_{(j_1\ldots j_t),(b_{j_1}\ldots b_{j_t})}\langle I_{j_1\ldots j_t}^{b_{j_1}\ldots b_{j_t}},y_{j_1}^{x^{b_{j_1}}},\ldots,y_{j_t}^{x^{b_{j_t}}}\rangle_r.
\end{align}
Since in (\ref{eq7}), either $I_{j_1,\ldots,j_t}^{b_{j_1}\ldots b_{j_t}}\subseteq[y_{j_1}^{x^{b_{j_1}}},\ldots,y_{j_t}^{x^{b_{j_t}}}]$, or $\langle I_{j_1,\ldots,j_t}^{b_{j_1}\ldots b_{j_t}},y_{j_1}^{x^{b_{j_1}}},\ldots,y_{j_t}^{x^{b_{j_t}}}\rangle_r$ is quasi-normal, then by Lemma \ref{lm5}, each member in the intersection (\ref{eq7}) is finitely generated as a radical well-mixed $\D$-ideal. And since (\ref{eq7}) is a finite intersection, by Lemma \ref{lm6}, $\langle I,y_{p_k}^{x^{a_k}}\rangle_r$ is finitely generated as a radical well-mixed $\D$-ideal for $1\le k\le l$. Moreover, by Lemma \ref{lm8}, $\langle I\rangle_r:\mb$ is finitely generated as a radical well-mixed $\D$-ideal. Putting all above together, by (\ref{eq8}) and Lemma \ref{lm6}, $\langle I\rangle_r$ is finitely generated as a radical well-mixed $\D$-ideal.
\end{proof}

\begin{cor}\label{cor}
Any strictly ascending chain of radical well-mixed $\D$-ideals generated by pure binomials in a $\D$-polynomial ring is finite.
\end{cor}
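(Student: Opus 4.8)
The plan is to derive the ascending chain condition directly from the finite basis result of Theorem \ref{thm3}, using the standard equivalence between "every ideal finitely generated" and ACC. Let $I_1\subseteq I_2\subseteq I_3\subseteq\cdots$ be an ascending chain of radical well-mixed $\D$-ideals in $K\{y_1,\ldots,y_n\}$, each generated by pure binomials (as a radical well-mixed $\D$-ideal). First I would form the union $I=\bigcup_{m\ge1}I_m$. The key preliminary observation is that $I$ is again a radical well-mixed $\D$-ideal: it is closed under addition and under multiplication by ring elements because the chain is directed; it is closed under $\sigma$ since each $I_m$ is; it is radical because if $a^k\in I$ then $a^k\in I_m$ for some $m$, hence $a\in I_m\subseteq I$; and it is well-mixed since $ab\in I$ puts $ab\in I_m$ for some $m$, whence $ab^x\in I_m\subseteq I$. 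Moreover $I$ is generated by pure binomials — concretely, $I=\langle\bigcup_m B_m\rangle_r$ where $B_m$ is a pure-binomial generating set of $I_m$ — so in particular $I=\langle J\rangle_r$ for the pure binomial $\D$-ideal $J=[\bigcup_m B_m]$.

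Now I would invoke Theorem \ref{thm3}: since $I=\langle J\rangle_r$ with $J$ a pure binomial $\D$-ideal, $I$ is finitely generated as a radical well-mixed $\D$-ideal, say $I=\langle f_1,\ldots,f_r\rangle_r$. Each generator $f_i$ lies in $I=\bigcup_m I_m$, so $f_i\in I_{m_i}$ for some index $m_i$. Taking $N=\max\{m_1,\ldots,m_r\}$, all of $f_1,\ldots,f_r$ lie in $I_N$, and since $I_N$ is a radical well-mixed $\D$-ideal we get $I=\langle f_1,\ldots,f_r\rangle_r\subseteq I_N$. But $I_N\subseteq I$ trivially, so $I_N=I$, and then $I_m=I_N$ for all $m\ge N$. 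Hence the chain stabilizes, which is exactly the assertion that no strictly ascending chain can be infinite.

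I do not expect a serious obstacle here; the only point requiring a little care is checking that the union of a directed chain of radical well-mixed $\D$-ideals generated by pure binomials is again of the same type, and in particular that it can be presented as $\langle J\rangle_r$ for a pure binomial $\D$-ideal $J$ so that Theorem \ref{thm3} applies verbatim. This is routine: one takes $J$ to be the $\D$-ideal generated by the union of pure-binomial generating sets of the $I_m$, notes $\langle J\rangle_r\subseteq I$ since each generator lies in some $I_m\subseteq I$ and $I$ is radical well-mixed, and conversely $I=\bigcup_m I_m=\bigcup_m\langle B_m\rangle_r\subseteq\langle J\rangle_r$. The rest is the familiar "finite generation implies ACC" argument. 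One could also phrase the whole corollary contrapositively: an infinite strictly ascending chain would have a union not finitely generated as a radical well-mixed $\D$-ideal, contradicting Theorem \ref{thm3}.
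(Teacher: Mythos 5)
Your proposal is correct and follows essentially the same route as the paper: form the union of the chain, observe it is again a radical well-mixed $\D$-ideal generated by pure binomials, apply Theorem \ref{thm3} to get a finite generating set, and locate that set in some $I_N$ to stabilize the chain. Your write-up merely spells out the routine verifications (that the union is radical, well-mixed, and presentable as $\langle J\rangle_r$ for a pure binomial $\D$-ideal $J$) that the paper leaves implicit.
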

\begin{proof}
Assume that $I_1\subseteq I_2\subseteq\ldots\subseteq I_k\ldots$ is an ascending chain of radical well-mixed $\D$-ideals generated by pure binomials in a $\D$-polynomial ring. Then $\cup_{i=1}^{\infty}I_i$ is also a radical well-mixed $\D$-ideal generated by pure binomials. By Theorem \ref{thm3}, $\cup_{i=1}^{\infty}I_i$ is finitely generated as a radical well-mixed $\D$-ideal, say $\{a_1,\dots,a_m\}$. Then there exists $k\in \N$ large enough such that $\{a_1,\dots,a_m\}\subset I_k$. It follows $I_k=I_{k+1}=\ldots=\cup_{i=1}^{\infty}I_i$.
\end{proof}

\begin{remark}
By Corollary \ref{cor}, Conjecture \ref{intro-conj} is valid for radical well-mixed $\D$-ideals generated by pure binomials in a $\D$-polynomial ring.
\end{remark}

\begin{remark}
Theorem \ref{thm3} and Corollary \ref{cor} actually hold for radical well-mixed $\D$-ideals generated by any binomials (not necessarily pure binomials).
\end{remark}

In \cite{levin1}, A. Levin gave an example to show that a strictly ascending chain of well-mixed $\D$-ideals in a $\D$-polynomial ring may be infinite. Here we give a simpler example.
\begin{example}
Let $I=\langle y_1^xy_2-y_1y_2^x\rangle$ and $I_0=[y_1^xy_2-y_1y_2^x,y_1^{x^j}(y_1^{x^i}y_2-y_1y_2^{x^i})^{x^l},y_2^{x^j}(y_1^{x^i}y_2-y_1y_2^{x^i})^{x^l}:i,j,l\in\N,i\ge2,j\ge i-1]$. We claim that $I=I_0$. It is easy to check that $I_0\subseteq I$. So we only need to show that $I_0$ is already a well-mixed $\D$-ideal. Following Example \ref{ex1}, let $\I_L=\langle y_1^xy_2-y_1y_2^x\rangle_r$. Suppose $ab\in I_0\subseteq\I_L$. Since $\I_L=[y_1^{x^{i}}y_2-y_1y_2^{x^{i}}:i\in\N^*]$ is a $\D$-prime $\D$-ideal, then $a\in\I_L$ or $b\in\I_L$. In each case, we can easily deduce that $ab^x\in I_0$. Therefore, $I_0$ is well-mixed and $I=I_0$. Thus $y_1^{x^2}y_2-y_1y_2^{x^2}\notin I$. In fact, in a similar way we can show that $\langle y_1^xy_2-y_1y_2^x,\ldots,y_1^{x^k}y_2-y_1y_2^{x^k}\rangle=[y_1^xy_2-y_1y_2^x,\ldots,y_1^{x^k}y_2-y_1y_2^{x^k},
y_1^{x^j}(y_1^{x^i}y_2-y_1y_2^{x^i})^{x^l},y_2^{x^j}(y_1^{x^i}y_2-y_1y_2^{x^i})^{x^l}:i,j,l\in\N,i\ge k+1,j\ge i-k]$ and $y_1^{x^{k+1}}y_2-y_1y_2^{x^{k+1}}\notin\langle y_1^xy_2-y_1y_2^x,\ldots,y_1^{x^k}y_2-y_1y_2^{x^k}\rangle$ for $k\ge2$. So we obtain a strictly infinite ascending chain of well-mixed $\D$-ideals:
$$\langle y_1^xy_2-y_1y_2^x\rangle\subsetneq\langle y_1^xy_2-y_1y_2^x,y_1^{x^2}y_2-y_1y_2^{x^2}\rangle\subsetneq\cdots\subsetneq\langle y_1^xy_2-y_1y_2^x,\ldots,y_1^{x^k}y_2-y_1y_2^{x^k}\rangle\subsetneq\cdots.$$
As a consequence, $\I_L$ is not finitely generated as a well-mixed $\D$-ideal.
\end{example}


In \cite{gao-dbi}, it is shown that the radical closure, the reflexive closure, and the perfect closure of a binomial $\D$-ideal are still a binomial $\D$-ideal. However, the well-mixed closure of a binomial $\D$-ideal may not be a binomial $\D$-ideal. More precisely, it relies on the action of the difference operator. We will give an example to illustrate this. 
\begin{example}
Let $K=\C$ and $R=\C\{y_1,y_2,y_3,y_4\}$. Let us consider the $\D$-ideal $I=\langle y_1^2(y_3-y_4),y_2^2(y_3-y_4)\rangle$ of $R$. Since $(y_1^2-y_2^2)(y_3-y_4)=(y_1+y_2)(y_1-y_2)(y_3-y_4)\in I$, we have $(y_1+y_2)(y_1-y_2)^x(y_3-y_4)=(y_1^{x+1}+y_1^xy_2-y_1y_2^x-y_2^{x+1})(y_3-y_4)\in I$. Note that $y_1^{x+1}(y_3-y_4),y_2^{x+1}(y_3-y_4)\in I$, hence $(y_1^xy_2-y_1y_2^x)(y_3-y_4)\in I$. If the difference operator on $\C$ is the identity map, similarly to Example 4.1 in \cite{Jie}, we can show that $y_1^xy_2(y_3-y_4),y_1y_2^x(y_3-y_4)\notin I$. As a consequence, $I$ is not a binomial $\D$-ideal.


On the other hand, if the difference operator on $\C$ is the conjugation map(that is $\D(i)=-i$), the situation is totally changed. Since $(y_1^2+y_2^2)(y_3-y_4)=(y_1+iy_2)(y_1-iy_2)(y_3-y_4)\in I$, $(y_1+iy_2)(y_1-iy_2)^x(y_3-y_4)=(y_1^{x+1}+iy_1^xy_2+iy_1y_2^x-y_2^{x+1})(y_3-y_4)\in I$ and hence $(y_1^xy_2+y_1y_2^x)(y_3-y_4)\in I$. Since we also have $(y_1^xy_2-y_1y_2^x)(y_3-y_4)\in I$, then $y_1^xy_2(y_3-y_4),y_1y_2^x(y_3-y_4)\in I$. Actually $I=[y_1^u(y_3-y_4)^a,y_1^{w_1}y_2^{w_2}(y_3-y_4)^a,y_2^v(y_3-y_4)^a:u,v,w_1,w_2,a\in\N[x],2\preceq u,2\preceq v,x+1\preceq w_1+w_2]$ ($\preceq$ is defined in \cite{Jie}). In this case, $I=\langle y_1^2(y_3-y_4),y_2^2(y_3-y_4)\rangle$ is indeed a binomial $\D$-ideal.
\end{example}

\begin{problem}
We conjecture that the radical well-mixed closure of a binomial $\D$-ideal is still a binomial $\D$-ideal. However, we cannot prove it now.
\end{problem}

\textbf{Acknowledgement.} The author thanks Professor ChunMing Yuan for helpful discussions.

\end{document}